\newtheorem{theorem}{{\bf Theorem}}
\newtheorem{corollary}[theorem]{{\bf Corollary}}
\newtheorem{proposition}[theorem]{{\bf Proposition}}
\newtheorem{remark}[theorem]{{\bf Remark}}
\newcommand{\lk}[2]{{\rm lk}_{#1}(#2)}
\newcommand{\st}[2]{{\rm st}_{#1}(#2)}
\newcommand{\skel}[2]{{\rm skel}_{#1}(#2)}
\newcommand{\Kd}[1]{{\mathcal K}(#1)}
\newcommand{\bs}{\backslash}
\newcommand{\ol}{\overline}
\newcommand{\meets}{\leftrightarrow}
\newcommand{\nmeets}{\nleftrightarrow}
\newcommand{\FF}{ \ensuremath{\mathbb{F}}}
\newcommand{\QQ}{ \ensuremath{\mathbb{Q}}}
\newcommand{\ZZ}{ \ensuremath{\mathbb{Z}}}
\newcommand{\IntA}{A^{\!\!\!^{^{\circ}}}}
\newcommand{\IntB}{B^{\!\!\!^{^{\circ}}}}
\newcommand{\TPSS}{S^{\hspace{.2mm}2}\! \times \hspace{-3.3mm}_{-} \,
S^{\hspace{.1mm}1}}
\renewcommand{\Authfont}{\scshape}
\renewcommand{\Affilfont}{\itshape\small}
\renewcommand{\Authand}{,}
\renewcommand{\Authands}{, }
\begin{document}

\author{Basudeb Datta\thanks{Supported by SERB, DST (Grant No.\,MTR/2017\!/ 000410) and the UGC Centre for Advance Studies. 
}}

\affil{Department of Mathematics, Indian Institute of Science, Bangalore 560\,012, India.
 dattab@iisc.ac.in.}

\title{Vertex-transitive covers of semi-equivelar toroidal maps}


\date{}

\maketitle

\vspace{-13mm}

\begin{abstract}
A map $X$ on a surface is called vertex-transitive if the automorphism group of $X$ acts transitively on the set of vertices of $X$.  If the face-cycles at all the vertices in a map  are of same type then the map is called semi-equivelar. In general, semi-equivelar maps on a surface form a bigger class than vertex-transitive maps. There are semi-equivelar toroidal maps which are not vertex-transitive. In this article, we  show that semi-equivelar toroidal maps are quotients of vertex-transitive toroidal maps. More explicitly, we prove that each semi-equivelar toroidal map  has a finite vertex-transitive cover.  In 2019, Drach {\em et al.} have shown that each vertex-transitive toroidal map has a minimal almost regular cover. Therefore, semi-equivelar toroidal maps are quotients of almost regular toroidal maps.
\end{abstract}

\noindent {\small {\em MSC 2010\,:} 52C20, 52B70, 51M20, 57M60.}

\noindent {\small {\em Keywords:} Maps on torus; Vertex-transitive maps;  Semi-equivelar maps; Archimedean tilings}

\section{Introduction}

Here all the maps are polyhedral maps on surfaces. Thus, a face of a map is an $n$-gon for some integer $n\geq 3$ and intersection of two intersecting faces is either a vertex or an edge.  Maps on the sphere and the torus are called {\em spherical} and {\em toroidal} maps respectively. For two maps $X$, $Y$, a surjection $\eta : Y \to X$ that preserves adjacency and sends vertices, edges and faces of $Y$ to vertices, edges and faces of $X$ respectively is called a {\em polyhedral covering} (or, in short, {\em covering}). Clearly, if $\eta : Y \to X$ is a covering then  $X$ is a quotient of $Y$. An {\em isomorphism} is an injective covering. We identify two maps if they are isomorphic.

The faces containing a vertex $u$ in a map $M$ form a cycle (the {\em face-cycle} at $u$) $C_u$ in the dual graph $\Lambda(M)$ of $M$. Clearly, $C_u$ is of the form $(F_{1,1}\mbox{-}\cdots \mbox{-}F_{1,n_1})\mbox{-}\cdots\mbox{-} (F_{k,1}\mbox{-}\cdots \mbox{-}F_{k,n_k})\mbox{-}F_{1,1}$, where $F_{i,j}$  is a  $p_i$-gon for $1\leq j \leq n_i$ and  $p_i\neq p_{i+1}$  for $1\leq i\leq k$  (addition in the suffix is modulo $k$).  
We say that the {\em vertex-type} of $u$ is  $[p_1^{n_1}, \dots, p_k^{n_k}]$. (We identify $[p_1^{n_1}, \dots, p_k^{n_k}]$ with $[p_k^{n_k}, \dots, p_1^{n_1}]$ and with $[p_2^{n_2}, \dots, p_k^{n_k}, p_1^{n_1}]$.)  A map $M$ is called {\em semi-equivelar} if  vertex-types of all the vertices are same.

A {\em semi-regular tiling} of a surface $S$ of constant curvature is a semi-equivelar map on $S$ in which each face  is a regular polygon and each edge is a geodesic. 
An {\em Archimedean tiling} of the plane $\mathbb{R}^2$ is a semi-regular tiling of the Euclidean plane. 
In \cite{GS1977}, Gr\"{u}nbaum and Shephard showed that there are exactly eleven types of Archimedean tilings of the plane. The vertex-types of these maps are $[3^6]$, $[4^4]$, $[6^3]$, $[3^4,6^1]$, $[3^3,4^2]$,  $[3^2,4^1,3^1,4^1]$, $[3^1,6^1,3^1,6^1]$, $[3^1,4^1,6^1,4^1]$, $[3^1,12^2]$,  $[4^1,6^1,12^1]$, $[4^1,8^2]$ respectively.  Clearly, the quotient of an Archimedean tiling of the plane by a discrete subgroup of the automorphism group of the tiling is a semi-equivelar map. This way we get eleven types of semi-equivelar toroidal maps. 
From \cite{DM2018}, we know the following converse.  

\begin{proposition} \label{prop:Archi}
 Each semi-equivelar toroidal map is $($isomorphic to$)$ a quotient of an Archimedean tiling of the plane. 
 \end{proposition}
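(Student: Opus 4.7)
My plan is to exploit the universal covering $\pi\colon\mathbb{R}^2\to T^2$ of the torus and show that the lift of the given map is forced to be combinatorially an Archimedean tiling, after which the deck group realizes the quotient.

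First, given a semi-equivelar toroidal map $M$ with vertex-type $[p_1^{n_1},\dots,p_k^{n_k}]$, I would pull $M$ back along $\pi$ to obtain a polyhedral map $\widetilde M$ on $\mathbb{R}^2$. Because $\pi$ is a local homeomorphism that carries faces, edges and vertices of $\widetilde M$ bijectively to those of $M$ in a neighborhood of each point, the face-cycle at every vertex of $\widetilde M$ is isomorphic to the face-cycle at its image in $M$; hence $\widetilde M$ is semi-equivelar of the same vertex-type.

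Next I would eliminate all vertex-types except the eleven listed ones. Applying Euler's formula $V-E+F=0$ to $M$ and using the semi-equivelar hypothesis, each vertex is surrounded by $N=n_1+\cdots+n_k$ faces with $n_i$ of them being $p_i$-gons, which upon double-counting incidences forces
\[
\sum_{i=1}^{k}\frac{n_i(p_i-2)}{p_i}=2.
\]
This is precisely the Euclidean angle-sum equation solved by Gr\"unbaum--Shephard, whose only solutions are the eleven tuples listed in the introduction. So $\widetilde M$ is a semi-equivelar planar map of one of these eleven Archimedean types.

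The central step, and the main obstacle, is the combinatorial rigidity statement: any semi-equivelar map on $\mathbb{R}^2$ with vertex-type in the Archimedean list is isomorphic to the corresponding Archimedean tiling. I would prove this by a propagation/induction argument: fix one vertex $v_0$ of $\widetilde M$ and the corresponding vertex of the Archimedean tiling, and construct an isomorphism outward ball-by-ball in the dual graph. At each step the already-defined part has a boundary vertex $v$ whose face-cycle is fully prescribed by the vertex-type, so the faces of $\widetilde M$ that must appear around $v$ are uniquely determined, as are their identifications with faces already placed (by the edge-intersection condition for polyhedral maps). Since $\mathbb{R}^2$ is simply connected, no monodromy obstruction arises, and the construction yields a global combinatorial isomorphism of $\widetilde M$ with the Archimedean tiling $\mathcal T$. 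This step is the delicate one because one must verify case-by-case (for each of the eleven types) that the local extension is well-defined and that the accumulated map is embedded, not just immersed.

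Finally, the deck transformation group $\Gamma\cong\mathbb{Z}^2$ of $\pi$ acts on $\widetilde M$ by automorphisms with quotient $M$, so under the isomorphism $\widetilde M\cong\mathcal T$ the image of $\Gamma$ is a discrete subgroup of $\operatorname{Aut}(\mathcal T)$ and $M\cong\mathcal T/\Gamma$, which is the desired conclusion.
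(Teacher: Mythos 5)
The paper does not prove Proposition~\ref{prop:Archi} at all: it is quoted from the reference [DM2018] (Datta--Maity, \emph{Discrete Math.}\ 2018), whose argument is essentially the route you take --- lift along the universal cover, classify the possible vertex-types, and identify the planar lift with an Archimedean tiling. So your overall architecture is the right one, and your first and last steps (the lift is semi-equivelar of the same type; the deck group realizes $M$ as a quotient) are fine.

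There is, however, a concrete error in your second step. The equation $\sum_{i=1}^{k} n_i(p_i-2)/p_i=2$ does \emph{not} have only the eleven listed tuples as solutions. Gr\"unbaum and Shephard's point is precisely that the angle-sum (equivalently, Euler-count) equation admits further solutions, e.g.\ $[3^1,7^1,42^1]$, $[3^1,8^1,24^1]$, $[3^1,9^1,18^1]$, $[3^1,10^1,15^1]$, $[4^1,5^1,20^1]$, $[5^2,10^1]$, and cyclic rearrangements such as $[3^1,4^2,6^1]$, none of which is realizable by a tiling; the eleven Archimedean types are the \emph{realizable} solutions, not the only solutions. To close this you need the standard local parity arguments (for instance, for a type $[3^1,p^1,q^1]$ with $p\neq q$, walking around a triangle forces the outer faces to alternate between $p$-gons and $q$-gons, which is impossible over an odd cycle), or you must fold this exclusion into your propagation step by showing that for the spurious types the local extension already fails. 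Relatedly, your third step --- that a semi-equivelar planar map of one of the eleven types is combinatorially unique --- is the actual content of the cited theorem and is only sketched; the ``no monodromy because $\mathbb{R}^2$ is simply connected'' remark does not by itself rule out that the ball-by-ball extension produces an immersed rather than embedded complex, which is exactly the case-by-case verification you flag but do not carry out. As written, then, the proposal has a genuine gap at the classification step and defers the hardest step to a sketch.
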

 
 Thus each semi-equivelar toroidal map is isomorphic to a semi-regular tiling of the flat torus. 
A map $M$ is said to be {\em vertex-transitive} if the automorphism group ${\rm Aut}(M)$ acts transitively on the set $V(M)$ of vertices of $M$.  Clearly, vertex-transitive maps are semi-equivelar. 
There are infinitely many  vertex-transitive toroidal maps of each of the eleven vertex-types. 
From \cite{DM2017, DM2018, DU2005}, we know the following.  

\begin{proposition} \label{prop:vt} $(a)$ If the vertex-type of a semi-equivelar toroidal map $X$ is $[3^6]$, $[6^3]$,  $[4^4]$ or $[3^3,4^2]$ then $X$ is vertex-transitive. 
$(b)$ If $[p_1^{n_1}, \dots, p_k^{n_k}] = [3^2,4^1,3^1,4^1]$, $[3^4,6^1]$, $[3^1,6^1,3^1,6^1]$, $[3^1,4^1,6^1,4^1]$,  $[3^1,12^2]$, $[4^1,8^2]$  or $[4^1,6^1,12^1]$ then there exists a semi-equivelar toroidal map of vertex-type $[p_1^{n_1}, \dots, p_k^{n_k}]$  which is not vertex-transitive.
\end{proposition}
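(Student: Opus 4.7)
By Proposition~\ref{prop:Archi}, every semi-equivelar toroidal map of vertex-type $\tau$ can be written as a quotient $\widetilde X/L$ of the Archimedean tiling $\widetilde X$ of type $\tau$ by a rank-$2$ translation lattice $L\leq {\rm Aut}(\widetilde X)$; the normalizer $N(L)$ of $L$ in ${\rm Aut}(\widetilde X)$ descends to ${\rm Aut}(\widetilde X/L)$, so vertex-transitivity of the quotient is equivalent to $N(L)$ acting transitively on $V(\widetilde X)$. The general fact driving both halves of the argument is that the half-turn $r_p:x\mapsto 2p-x$ about any $p\in\mathbb R^2$ conjugates every translation to its inverse, hence always normalizes $L$, and therefore descends to the quotient whenever $r_p$ is a symmetry of $\widetilde X$.

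For part~(a) I would treat the four types one by one. In types $[3^6]$ and $[4^4]$ the vertex set of $\widetilde X$ is itself a rank-$2$ lattice, so the translation subgroup acts transitively on $V(\widetilde X)$ and the conclusion is automatic. In type $[6^3]$ the translation orbits on vertices are the two colour classes of the bipartite honeycomb graph, but the half-turn about any hexagon centre is a symmetry of $\widetilde X$ that swaps them; since half-turns descend unconditionally, $N(L)$ acts transitively. In type $[3^3,4^2]$ (the elongated triangular tiling) the two translation orbits are distinguished by whether the three consecutive triangles at the vertex lie above or below the two consecutive squares, and the half-turn about the centre of any square is a symmetry of $\widetilde X$ that exchanges them. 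In all four cases $N(L)$ acts transitively on $V(\widetilde X)$ for every $L$, establishing~(a).

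For part~(b) the plan is, for each of the seven listed vertex-types, to exhibit an explicit sublattice $L$ of the full translation lattice of $\widetilde X$ such that $N(L)$ has more than one orbit on $V(\widetilde X)$. The guiding principle is that in each of these seven cases the subgroup of ${\rm Aut}(\widetilde X)$ generated by translations and all half-turns already has several orbits on $V(\widetilde X)$, and fusing these orbits requires an extra rotation of order $3$, $4$, or $6$, or else a reflection of $\widetilde X$. I would therefore choose $L$ to be a rectangular or skew sublattice of the translation lattice whose generators have a ratio and orientation incompatible with these extra rotations and reflections; then $N(L)$ contains only translations and half-turns, so its several orbits on $V(\widetilde X)$ descend to several orbits on $V(\widetilde X/L)$.

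The main obstacle is the case-by-case construction in part~(b). For each of the seven vertex-types I must (i) describe the orbit structure of $V(\widetilde X)$ under translations and half-turns, (ii) identify the additional isometries of $\widetilde X$ that would merge these orbits, and (iii) produce an explicit sublattice $L$ whose normalizer in ${\rm Aut}(\widetilde X)$ excludes every such additional isometry while still being a rank-$2$ sublattice of the translation lattice, so that $\widetilde X/L$ is a bona fide toroidal map. Step~(iii) is the delicate one, since it requires an explicit algebraic description of each crystallographic group ${\rm Aut}(\widetilde X)$ and a careful choice of generators of $L$; the other steps are routine once one has pictures of the eleven Archimedean tilings at hand.
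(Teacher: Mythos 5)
First, note that the paper does not prove Proposition~\ref{prop:vt} at all: it is quoted from \cite{DM2017,DM2018,DU2005}, so there is no in-paper argument to compare yours against. Judged on its own terms, your part~(a) is essentially complete and correct: the observation that a point reflection $r_p$ conjugates every translation to its inverse, hence normalizes \emph{every} translation lattice $L$ and therefore descends to every quotient, combined with the fact that in the four tilings $[3^6]$, $[4^4]$, $[6^3]$, $[3^3,4^2]$ the translations together with half-turns already act transitively on vertices, does yield vertex-transitivity of every quotient. (You should still justify, as the paper does at the start of the proof of Theorem~\ref{theo:se-vt}, that the deck group $L$ really is a translation lattice: it acts freely, hence consists of translations and glide reflections, and orientability of the torus rules out the latter.)

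Part~(b), however, is a program rather than a proof, and it has two genuine gaps. First, you never exhibit the seven lattices $L$; you only describe the properties they should have (``a ratio and orientation incompatible with these extra rotations and reflections''). The entire content of part~(b) is precisely these explicit constructions, together with a verification, in each of the seven tilings, that translations and half-turns leave at least two vertex orbits --- you assert this but check it nowhere, and the orbit counts differ from case to case (four translation orbits for $[4^1,8^2]$, six for $[3^4,6^1]$, twelve for $[4^1,6^1,12^1]$, where even adjoining the full rotation group leaves two orbits and a reflection is needed to fuse them). Second, and more importantly, your reduction rests on the claimed \emph{equivalence} between vertex-transitivity of $\widetilde X/L$ and transitivity of $N(L)$ on $V(\widetilde X)$. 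The direction needed for~(b) --- that intransitivity of $N(L)$ forces the quotient to be non-vertex-transitive --- requires showing that \emph{every} automorphism of the toroidal map lifts to an automorphism of $\widetilde X$ normalizing $L$; a priori the quotient could have combinatorial automorphisms not induced from the plane. This lifting statement is true (it is the backbone of \cite{DM2017}), but it is a substantive step that must be stated and justified rather than folded silently into an ``is equivalent to''.
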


We know that boundary of the pseudo-rhombicuboctahedron is a semi-regular spherical map  of vertex-type $[4^3,3]$. Since the sphere $\mathbb{S}^2$ is simply connected, this map has  no vertex-transitive cover. 
Since the Archimedean tilings are vertex-transitive maps, each semi-equivelar toroidal map has vertex-transitive universal cover. Here we prove

\begin{theorem} \label{theo:se-vt}
If $X$ is a semi-equivelar toroidal map then there exists a covering $\eta : Y \to X$ where $Y$ is a vertex-transitive toroidal map.
\end{theorem}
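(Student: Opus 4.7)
By Proposition \ref{prop:Archi}, I may write $X = \widetilde{X}/\Gamma$, where $\widetilde{X}$ is one of the eleven Archimedean tilings of $\mathbb{R}^2$ and $\Gamma$ is a discrete, torsion-free, cocompact subgroup of $G := \mathrm{Aut}(\widetilde{X})$ acting freely as deck transformations. Because $X$ is orientable, $\Gamma$ preserves orientation, so $\Gamma$ is contained in the translation subgroup $T \leq G$, and cocompactness forces $\Gamma$ to be of finite index in $T \cong \mathbb{Z}^2$. My plan is to produce a subgroup $\Lambda \leq \Gamma$ of finite index that is \emph{normal in all of $G$}, and then set $Y := \widetilde{X}/\Lambda$. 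The inclusion $\Lambda \leq \Gamma$ will supply a canonical covering $\eta : Y \to X$, while the normality of $\Lambda$ in $G$ will cause $G/\Lambda$ to act on $Y$ by map automorphisms transitively on vertices (inherited from the vertex-transitivity of $G$ on $\widetilde{X}$).

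To construct $\Lambda$, I rely on the classical fact that for each of the eleven Archimedean tilings the point group $G/T$ is finite. Since $T$ is abelian and normal in $G$, the conjugation action of $G$ on subgroups of $T$ factors through the finite quotient $G/T$, so only finitely many distinct conjugates $g\Gamma g^{-1}$ arise as $g$ ranges over $G$. I therefore define
\[
\Lambda \;:=\; \bigcap_{g \in G} g\,\Gamma\, g^{-1}.
\]
By construction $\Lambda$ is normal in $G$, and as a finite intersection of finite-index subgroups of $T$ it is itself a rank-two sublattice of $\mathbb{R}^2$ of finite index in both $T$ and $\Gamma$. Consequently $Y = \widetilde{X}/\Lambda$ is a finite toroidal map, and because $\Lambda \leq \Gamma$ while $\Gamma$ already induces a polyhedral quotient $X$, the coarser quotient by $\Lambda$ is \emph{a fortiori} polyhedral.

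The steps I expect to demand the most care are the two preliminary structural facts: (a) that $\Gamma \leq T$, which I plan to deduce from the orientability of the torus $X$ together with the observation that any glide-reflection in $\Gamma$ would render the quotient non-orientable; and (b) finiteness of $G/T$, which can either be cited from the classification of the two-dimensional crystallographic groups or derived tiling-by-tiling from the finite rotational/reflectional symmetry at each Archimedean vertex figure. Once these are in hand the core-subgroup construction is essentially formal, and the final verifications --- that the projection $\eta : Y \to X$ induced by $\Lambda \leq \Gamma$ sends vertices/edges/faces of $Y$ to vertices/edges/faces of $X$, and that $G/\Lambda$ acts transitively on $V(Y)$ --- are immediate from the setup. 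The main conceptual obstacle is simply recognising that the normal-core trick applies here precisely because $G/T$ is finite; without that finiteness one could not hope for $\Lambda$ to retain finite index in $\Gamma$.
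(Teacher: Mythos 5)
Your proposal is correct, and it reaches Theorem \ref{theo:se-vt} by a genuinely different and more uniform route than the paper. The paper argues case by case: for each of the seven vertex-types not already settled by Proposition \ref{prop:vt}$(a)$ it fixes explicit translation generators $\alpha_i,\beta_i$ of ${\mathcal H}_i$, writes ${\mathcal K}_i$ in terms of integers $a,b,c,d$, produces by hand an integer $m$ dividing $|ad-bc|$ with ${\mathcal L}_i:=\langle \alpha_i^m,\beta_i^m\rangle\leq{\mathcal K}_i$, and then checks by direct coordinate computation that ${\mathcal L}_i$ is normalized by an explicit rotation $\rho_i$ (plus a reflection $\tau_7$ in the $[4^1,6^1,12^1]$ case) generating a vertex-transitive group ${\mathcal G}_i$. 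Your normal core $\Lambda=\bigcap_{g\in G}g\Gamma g^{-1}$ replaces all of that: normality in $G$ is automatic, finiteness of the index follows from $T$ being abelian together with finiteness of the point group $G/T$, and the argument handles all eleven types at once (it even renders the appeal to Proposition \ref{prop:vt}$(a)$ for the four equivelar types unnecessary). The two preliminary facts you flag --- $\Gamma\leq T$ via freeness of the action plus orientability, and finiteness of $G/T$ --- are exactly the inputs the paper also uses, so nothing essential is missing. What the paper's explicit computation buys in exchange is quantitative control: the degree of the cover is identified as $m^2/|ad-bc|$ (Remark \ref{remark1}) and the construction visibly produces the infinite family of covers of Remark \ref{remark2}, whereas your $\Lambda$ yields a cover of finite but not explicitly computed degree. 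One wording slip: quotienting by the \emph{smaller} group $\Lambda$ gives a finer (larger) quotient, not a ``coarser'' one; your intended justification --- any forbidden face intersection in $\widetilde{X}/\Lambda$ would project to one in $X$ --- is nevertheless sound, and is the same unstated verification the paper relies on when it asserts that $E_i/{\mathcal L}_i$ is a toroidal map.
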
 

Thus, semi-equivelar toroidal maps are quotients of vertex-transitive toroidal maps. Since semi-equivelar maps on the Klein bottle are quotients of semi-equivelar toroidal maps, it follows from Theorem 
\ref{theo:se-vt} that semi-equivelar maps on the Klein bottle are quotients of vertex-transitive toroidal maps. 

A {\em flag} in a map $M$ is a triple $(v, e, f)$, where $f$ is a face of $M$, $e$ is an edge of $f$ and $v$ is a vertex of $e$. A map $X$ is called {\em regular} if  $\mbox{Aut}(X)$ acts transitively on the set of flags of $X$. Not all Archimedean tilings are regular. 
A semi-equivelar toroidal map $X$ is called {\em almost regular} if it has the same number of flag orbits under the action of its automorphism group as the Archimedean tiling $\widetilde{X}$ of the plane of same vertex-type has under the action of its symmetry group. From \cite{DHMS2019}, we know 

\begin{proposition} \label{prop:vt-ar} 
If $Y$ is a vertex-transitive toroidal map then there exists a covering $\xi : Z \to Y$ where $Z$ is an almost  regular toroidal map.
\end{proposition}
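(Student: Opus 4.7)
Let $Y$ be a vertex-transitive toroidal map. By Proposition~\ref{prop:Archi}, its universal cover is an Archimedean tiling $\widetilde{Y}$ of $\mathbb{R}^2$ of the same vertex-type, so $Y = \widetilde{Y}/\Gamma$ for a rank-$2$ translation lattice $\Gamma$ of deck transformations. Let $G$ be the full symmetry group of $\widetilde{Y}$ (a wallpaper group) and $T \trianglelefteq G$ its translation subgroup; then $T$ is a $\mathbb{Z}^2$-lattice of finite index in $G$, $\Gamma \subseteq T$, and compactness of $Y$ forces $[T:\Gamma] < \infty$. Write $r$ for the number of $G$-orbits on flags of $\widetilde{Y}$; by definition $Z$ will be almost regular exactly when $\text{Aut}(Z)$ has precisely $r$ flag orbits, so $r$ is the target flag-orbit count we must match.

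The plan is to replace $\Gamma$ by a subgroup $\Gamma' \subseteq \Gamma$ that is normal in all of $G$, and then set $Z := \widetilde{Y}/\Gamma'$. Concretely, define
\[
\Gamma' \;=\; \bigcap_{g \in G}\, g\,\Gamma\, g^{-1}.
\]
Because $T$ is abelian and $[G:T] < \infty$, only finitely many distinct conjugates of $\Gamma$ occur (one per coset of $T$ in $G$), each lies in $T$ as a finite-index sublattice, and hence $\Gamma'$ is a finite-index sublattice of $\Gamma$ in $T$, normal in $G$ by construction. Therefore $Z$ is a toroidal map and the inclusion $\Gamma' \subseteq \Gamma$ induces a finite polyhedral covering $\xi \colon Z \to Y$.

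Now we verify almost regularity. Since $\Gamma' \trianglelefteq G$, the quotient $\overline{G} := G/\Gamma'$ acts faithfully on $Z$ by polyhedral automorphisms, and the covering projection $\widetilde{Y} \to Z$ sets up a bijection between $G$-orbits on flags of $\widetilde{Y}$ and $\overline{G}$-orbits on flags of $Z$: two flags of $Z$ are $\overline{G}$-equivalent iff any (hence every) pair of their lifts is $G$-equivalent. In particular $\overline{G}$ partitions the flags of $Z$ into $r$ orbits, which already shows $|\text{Flags}(Z)/\text{Aut}(Z)| \le r$. For the matching lower bound, any $\sigma \in \text{Aut}(Z)$ lifts along the universal cover to a combinatorial automorphism of $\widetilde{Y}$; since the combinatorial automorphism group of an Archimedean tiling coincides with its Euclidean symmetry group $G$, this lift lies in $G$ and (using $\Gamma' \trianglelefteq G$) descends to $\overline{G}$. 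Hence $\text{Aut}(Z) = \overline{G}$ has exactly $r$ flag orbits on $Z$, and $Z$ is almost regular.

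The single non-formal ingredient, and the main obstacle, is the rigidity fact $\text{Aut}(\widetilde{Y}) = G$ for Archimedean tilings: without it, $Z$ could a priori acquire combinatorial automorphisms not coming from $G$, fusing distinct $G$-flag-orbits and pushing the orbit count strictly below $r$, which would destroy almost regularity. Establishing this rigidity (tiling-by-tiling for the eleven Archimedean vertex-types, using that the stabiliser of a flag in a planar semi-regular tiling is forced to be trivial) is the bulk of the work; once it is in hand the rest of the argument is clean group-theoretic bookkeeping.
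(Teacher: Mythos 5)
The paper offers no proof of Proposition~\ref{prop:vt-ar} for you to be measured against: it is quoted wholesale from Drach, Haidamaka, Mixer and Skoryk \cite{DHMS2019}, so any argument you give is necessarily ``a different route'' from the paper's (which is none). Taken on its own terms, your construction is the right one and is close in spirit to what \cite{DHMS2019} actually does. Since conjugating a translation $\tau_v$ by an isometry $g$ with linear part $M$ gives $\tau_{Mv}$, your normal core $\Gamma'=\bigcap_{g\in G}g\Gamma g^{-1}$ equals $\bigcap_{M\in P}M(\Gamma)$ over the finite point group $P$, hence is a finite-index, $G$-invariant sublattice of $T$; the quotient $Z=\widetilde Y/\Gamma'$ is then a toroidal cover of $Y$ on which $G/\Gamma'$ acts with exactly $r$ flag orbits, and your bookkeeping with lifts and descents is correct. (The cited paper works harder only because it wants the \emph{minimal} such cover; for bare existence, as stated here, your coarser construction suffices.)

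The one point at which your write-up is not yet a proof is exactly the one you flag: the rigidity statement $\mathrm{Aut}(\widetilde Y)=G$. Two caveats. First, the hint you give for it --- triviality of flag stabilisers --- does not by itself deliver the conclusion: freeness of the $\mathrm{Aut}(\widetilde Y)$-action on flags is perfectly compatible with $\mathrm{Aut}(\widetilde Y)$ being strictly larger than $G$ and fusing distinct $G$-orbits of flags, which is precisely the failure mode you need to exclude. What is actually required is that every combinatorial automorphism of an Archimedean tiling is induced by an isometry; this is true and standard (provable by a development argument: an automorphism of a polyhedral map is determined by the image of one flag, and matching local geometric data lets one extend an isometry face by face, as in \cite{GS1977, DM2018}), but it must be supplied or cited rather than gestured at. Second, you should also record why $Z$ inherits polyhedrality from $Y$ (faces or face-pairs of $Z$ that intersected badly would project to equally bad intersections in $Y$, since $\Gamma'\subseteq\Gamma$); this is routine and is glossed over in the paper's own Theorem~\ref{theo:se-vt} as well. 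With those two points patched, your proof is complete.
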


As a consequence of Theorem \ref{theo:se-vt} and Proposition \ref{prop:vt-ar}  we get 
\begin{corollary} \label{cor:se-ar}
If $X$ is a semi-equivelar toroidal map then there exists a covering $\mu : Z \to X$ where $Z$ is an almost regular toroidal map. 
\end{corollary}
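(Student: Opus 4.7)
The plan is to obtain the covering $\mu : Z \to X$ as the composition of the two coverings supplied by Theorem \ref{theo:se-vt} and Proposition \ref{prop:vt-ar}. Given a semi-equivelar toroidal map $X$, Theorem \ref{theo:se-vt} produces a polyhedral covering $\eta : Y \to X$ in which $Y$ is a vertex-transitive toroidal map. Applying Proposition \ref{prop:vt-ar} to this $Y$ then yields a polyhedral covering $\xi : Z \to Y$ in which $Z$ is an almost regular toroidal map. I would then define $\mu := \eta \circ \xi : Z \to X$ and claim that this is the desired covering.

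What remains is to check that $\mu$ is itself a polyhedral covering in the sense of the introduction, i.e.\ a surjection that preserves adjacency and carries vertices, edges, and faces of $Z$ to vertices, edges, and faces of $X$ respectively. This is a routine verification: the composition of surjections is a surjection, preservation of adjacency is clearly closed under composition, and the property of sending $i$-cells to $i$-cells is manifestly compositional. Hence $\mu$ is a polyhedral covering of $X$ by the almost regular toroidal map $Z$, which is exactly the content of the corollary.

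The hard part is not in this step at all — it is entirely packaged inside the two preceding results. The only mild point I would flag is that both Theorem \ref{theo:se-vt} and Proposition \ref{prop:vt-ar} must be formulated in the same category of polyhedral maps and polyhedral coverings introduced at the start of the paper, so that the composition $\eta \circ \xi$ lands in the correct class; this is the convention in force throughout, so no extra work is needed.
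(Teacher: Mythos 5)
Your proposal is correct and is exactly the paper's argument: the corollary is stated there as an immediate consequence of Theorem \ref{theo:se-vt} and Proposition \ref{prop:vt-ar}, obtained by composing the covering $\xi : Z \to Y$ with $\eta : Y \to X$. Your added remark that the composite of polyhedral coverings is a polyhedral covering is the only verification needed, and it holds for the reasons you give.
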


\section{Proof of Theorem \ref{theo:se-vt}}

The seven diagrams in Figures $1, \dots, 4$ represent seven Archimedean tilings of the plane. These diagrams are  from \cite{DM2017, DM2018}. We have included these for the following proof. 

\begin{proof}[Proof of Theorem \ref{theo:se-vt}]
For $1\leq i\leq 7$, let $E_{i}$ be the Archimedean tiling given in Figures $1\dots, 4$. Let $V_{i} = V(E_i)$ be the vertex set of $E_{i}$. Let ${\mathcal H}_{i}$ be the group of all the translations of $E_{i}$. So, ${\mathcal H}_i \leq {\rm Aut}(E_{i})$.

Let $X$ be a semi-equivelar toroidal map. If the vertex-type of $X$ is $[3^6]$, $[6^3]$,  $[4^4]$ or $[3^3,4^2]$ then, by  Proposition \ref{prop:vt}, $X$ is vertex-transitive and there is nothing to prove. 

\smallskip 

\noindent {\sf Case 1.} Assume that the vertex-type of $X$ is $[4^1,8^2]$. By Proposition \ref{prop:Archi},  we can assume that $X = E_{1}/{\mathcal K}_{1}$ for some subgroup ${\mathcal K}_{1}$ of ${\rm Aut}(E_{1})$ and ${\mathcal K}_{1}$ has no fixed element (vertex, edge or face). Hence ${\mathcal K}_{1}$ consists of translations and glide reflections. Since $X = E_{1}/{\mathcal K}_{1}$ is orientable, ${\mathcal K}_{1}$ does not contain any glide reflection. Thus ${\mathcal K}_{1} \leq  {\mathcal H}_{1}$. Let $\eta_{1} : E_{1}\to X=E_1/{\mathcal K}_1$ be the canonical  covering.
We take origin $(0,0)$ to be the middle point of the line segment joining vertices $u_{0,0}$ and $u_{1,1}$ of $E_{1}$ (see Fig. 1 (a)). Let $A_1 := u_{1,0} - u_{0,0}$, $B_1 := u_{0,2} - u_{0,0} \in \mathbb{R}^2$. Then ${\mathcal H}_{1} =$ $ \langle \alpha_1 : z \mapsto z + A_1, \, \beta_1 : z \mapsto z + B_1 \rangle$. 


\begin{figure}[ht!]

\tiny
\tikzstyle{ver}=[]
\tikzstyle{vert}=[circle, draw, fill=black!100, inner sep=0pt, minimum width=4pt]
\tikzstyle{vertex}=[circle, draw, fill=black!00, inner sep=0pt, minimum width=4pt]
\tikzstyle{edge} = [draw,thick,-]

\begin{tikzpicture}[scale=0.41]

\draw [yshift = -106] ({-3.7+2*cos(337.5)}, {1+2*sin(337.5)}) -- ({-3.7+2*cos(22.5)}, {2*sin(22.5)}) -- ({-3.7+2*cos(67.5)}, {2*sin(67.5)}) -- ({-5.7+2*cos(22.5)}, {2*sin(67.5)});

\draw [yshift = -106] ({2*cos(22.5)}, {2*sin(22.5)}) -- ({2*cos(67.5)}, {2*sin(67.5)}) -- ({2*cos(112.5)}, {2*sin(112.5)}) -- ({2*cos(157.5)}, {2*sin(157.5)});

\draw [yshift = -106] ({3.7+2*cos(337.5)}, {1+2*sin(337.5)}) -- ({3.7+2*cos(22.5)}, {2*sin(22.5)}) -- ({3.7+2*cos(67.5)}, {2*sin(67.5)}) -- ({3.7+2*cos(112.5)}, {2*sin(112.5)}) -- ({3.7+2*cos(157.5)}, {2*sin(157.5)}) -- ({3.7+2*cos(202.5)}, {1+2*sin(202.5)});

\draw [yshift = -106] ({7.4+2*cos(337.5)}, {1+ 2*sin(337.5)}) -- ({7.4+2*cos(22.5)}, {2*sin(22.5)}) -- ({7.4+2*cos(67.5)}, {2*sin(67.5)}) -- ({7.4+2*cos(112.5)}, {2*sin(112.5)}) -- ({7.4+2*cos(157.5)}, {2*sin(157.5)}) -- ({7.4+2*cos(202.5)}, {1+2*sin(202.5)});

\draw [yshift = -106] ({10.4+2*cos(67.5)}, {2*sin(67.5)}) -- ({11.1+2*cos(112.5)}, {2*sin(112.5)}) -- ({11.1+2*cos(157.5)}, {2*sin(157.5)});


\draw ({-3+2*cos(247.5)}, {2*sin(247.5)}) -- ({-3.7+2*cos(292.5)}, {2*sin(292.5)}) -- ({-3.7+2*cos(337.5)}, {2*sin(337.5)}) -- ({-3.7+2*cos(22.5)}, {2*sin(22.5)}) -- ({-3.7+2*cos(67.5)}, {2*sin(67.5)}) -- ({-5.7+2*cos(22.5)}, {2*sin(67.5)});

\draw ({2*cos(22.5)}, {2*sin(22.5)}) -- ({2*cos(67.5)}, {2*sin(67.5)}) -- ({2*cos(112.5)}, {2*sin(112.5)}) -- ({2*cos(157.5)}, {2*sin(157.5)}) -- ({2*cos(202.5)}, {2*sin(202.5)}) -- ({2*cos(247.5)}, {2*sin(247.5)}) -- ({2*cos(292.5)}, {2*sin(292.5)}) -- ({2*cos(337.5)}, {2*sin(337.5)}) -- ({2*cos(22.5)}, {2*sin(22.5)});

\draw ({3.7+2*cos(22.5)}, {2*sin(22.5)}) -- ({3.7+2*cos(67.5)}, {2*sin(67.5)}) -- ({3.7+2*cos(112.5)}, {2*sin(112.5)}) -- ({3.7+2*cos(157.5)}, {2*sin(157.5)}) -- ({3.7+2*cos(202.5)}, {2*sin(202.5)}) -- ({3.7+2*cos(247.5)}, {2*sin(247.5)}) -- ({3.7+2*cos(292.5)}, {2*sin(292.5)}) -- ({3.7+2*cos(337.5)}, {2*sin(337.5)}) -- ({3.7+2*cos(22.5)}, {2*sin(22.5)});

\draw ({7.4+2*cos(22.5)}, {2*sin(22.5)}) -- ({7.4+2*cos(67.5)}, {2*sin(67.5)}) -- ({7.4+2*cos(112.5)}, {2*sin(112.5)}) -- ({7.4+2*cos(157.5)}, {2*sin(157.5)}) -- ({7.4+2*cos(202.5)}, {2*sin(202.5)}) -- ({7.4+2*cos(247.5)}, {2*sin(247.5)}) -- ({7.4+2*cos(292.5)}, {2*sin(292.5)}) -- ({7.4+2*cos(337.5)}, {2*sin(337.5)}) -- ({7.4+2*cos(22.5)}, {2*sin(22.5)});

\draw ({10.4+2*cos(67.5)}, {2*sin(67.5)}) -- ({11.1+2*cos(112.5)}, {2*sin(112.5)}) -- ({11.1+2*cos(157.5)}, {2*sin(157.5)}) -- ({11.1+2*cos(202.5)}, {2*sin(202.5)}) -- ({11.1+2*cos(247.5)}, {2*sin(247.5)}) -- ({10.4+2*cos(292.5)}, {2*sin(292.5)});


\draw [yshift = 106] ({-3+2*cos(247.5)}, {2*sin(247.5)}) -- ({-3.7+2*cos(292.5)}, {2*sin(292.5)}) -- ({-3.7+2*cos(337.5)}, {2*sin(337.5)}) -- ({-3.7+2*cos(22.5)}, {2*sin(22.5)}) -- ({-3.7+2*cos(67.5)}, {2*sin(67.5)}) -- ({-5.7+2*cos(22.5)}, {2*sin(67.5)});

\draw [yshift = 106] ({2*cos(22.5)}, {2*sin(22.5)}) -- ({2*cos(67.5)}, {2*sin(67.5)}) -- ({2*cos(112.5)}, {2*sin(112.5)}) -- ({2*cos(157.5)}, {2*sin(157.5)}) -- ({2*cos(202.5)}, {2*sin(202.5)}) -- ({2*cos(247.5)}, {2*sin(247.5)}) -- ({2*cos(292.5)}, {2*sin(292.5)}) -- ({2*cos(337.5)}, {2*sin(337.5)}) -- ({2*cos(22.5)}, {2*sin(22.5)});

\draw [yshift = 106] ({3.7+2*cos(22.5)}, {2*sin(22.5)}) -- ({3.7+2*cos(67.5)}, {2*sin(67.5)}) -- ({3.7+2*cos(112.5)}, {2*sin(112.5)}) -- ({3.7+2*cos(157.5)}, {2*sin(157.5)}) -- ({3.7+2*cos(202.5)}, {2*sin(202.5)}) -- ({3.7+2*cos(247.5)}, {2*sin(247.5)}) -- ({3.7+2*cos(292.5)}, {2*sin(292.5)}) -- ({3.7+2*cos(337.5)}, {2*sin(337.5)}) -- ({3.7+2*cos(22.5)}, {2*sin(22.5)});

\draw [yshift = 106] ({7.4+2*cos(22.5)}, {2*sin(22.5)}) -- ({7.4+2*cos(67.5)}, {2*sin(67.5)}) -- ({7.4+2*cos(112.5)}, {2*sin(112.5)}) -- ({7.4+2*cos(157.5)}, {2*sin(157.5)}) -- ({7.4+2*cos(202.5)}, {2*sin(202.5)}) -- ({7.4+2*cos(247.5)}, {2*sin(247.5)}) -- ({7.4+2*cos(292.5)}, {2*sin(292.5)}) -- ({7.4+2*cos(337.5)}, {2*sin(337.5)}) -- ({7.4+2*cos(22.5)}, {2*sin(22.5)});

\draw [yshift = 106] ({10.4+2*cos(67.5)}, {2*sin(67.5)}) -- ({11.1+2*cos(112.5)}, {2*sin(112.5)}) -- ({11.1+2*cos(157.5)}, {2*sin(157.5)}) -- ({11.1+2*cos(202.5)}, {2*sin(202.5)}) -- ({11.1+2*cos(247.5)}, {2*sin(247.5)}) -- ({10.4+2*cos(292.5)}, {2*sin(292.5)});

\draw [yshift = 212] ({-3+2*cos(247.5)}, {2*sin(247.5)}) -- ({-3.7+2*cos(292.5)}, {2*sin(292.5)}) -- ({-3.7+2*cos(337.5)}, {2*sin(337.5)}) -- ({-3.7+2*cos(22.5)}, {2*sin(22.5)}) -- ({-3.7+2*cos(67.5)}, {2*sin(67.5)}) -- ({-5.7+2*cos(22.5)}, {2*sin(67.5)});

\draw [yshift = 212] ({2*cos(22.5)}, {2*sin(22.5)}) -- ({2*cos(67.5)}, {2*sin(67.5)}) -- ({2*cos(112.5)}, {2*sin(112.5)}) -- ({2*cos(157.5)}, {2*sin(157.5)}) -- ({2*cos(202.5)}, {2*sin(202.5)}) -- ({2*cos(247.5)}, {2*sin(247.5)}) -- ({2*cos(292.5)}, {2*sin(292.5)}) -- ({2*cos(337.5)}, {2*sin(337.5)}) -- ({2*cos(22.5)}, {2*sin(22.5)});

\draw [yshift = 212] ({3.7+2*cos(22.5)}, {2*sin(22.5)}) -- ({3.7+2*cos(67.5)}, {2*sin(67.5)}) -- ({3.7+2*cos(112.5)}, {2*sin(112.5)}) -- ({3.7+2*cos(157.5)}, {2*sin(157.5)}) -- ({3.7+2*cos(202.5)}, {2*sin(202.5)}) -- ({3.7+2*cos(247.5)}, {2*sin(247.5)}) -- ({3.7+2*cos(292.5)}, {2*sin(292.5)}) -- ({3.7+2*cos(337.5)}, {2*sin(337.5)}) -- ({3.7+2*cos(22.5)}, {2*sin(22.5)});

\draw [yshift = 212] ({7.4+2*cos(22.5)}, {2*sin(22.5)}) -- ({7.4+2*cos(67.5)}, {2*sin(67.5)}) -- ({7.4+2*cos(112.5)}, {2*sin(112.5)}) -- ({7.4+2*cos(157.5)}, {2*sin(157.5)}) -- ({7.4+2*cos(202.5)}, {2*sin(202.5)}) -- ({7.4+2*cos(247.5)}, {2*sin(247.5)}) -- ({7.4+2*cos(292.5)}, {2*sin(292.5)}) -- ({7.4+2*cos(337.5)}, {2*sin(337.5)}) -- ({7.4+2*cos(22.5)}, {2*sin(22.5)});

\draw [yshift = 212] ({10.4+2*cos(67.5)}, {2*sin(67.5)}) -- ({11.1+2*cos(112.5)}, {2*sin(112.5)}) -- ({11.1+2*cos(157.5)}, {2*sin(157.5)}) -- ({11.1+2*cos(202.5)}, {2*sin(202.5)}) -- ({11.1+2*cos(247.5)}, {2*sin(247.5)}) -- ({10.4+2*cos(292.5)}, {2*sin(292.5)});

\draw [yshift = 318] ({-3+2*cos(247.5)}, {2*sin(247.5)}) -- ({-3.7+2*cos(292.5)}, {2*sin(292.5)}) -- ({-3.7+2*cos(337.5)}, {2*sin(337.5)}) -- ({-3.7+2*cos(22.5)}, {-1+2*sin(22.5)});

\draw [yshift = 318] ({2*cos(202.5)}, {2*sin(202.5)}) -- ({2*cos(247.5)}, {2*sin(247.5)}) -- ({2*cos(292.5)}, {2*sin(292.5)}) -- ({2*cos(337.5)}, {2*sin(337.5)}) -- ({2*cos(22.5)}, {-1+2*sin(22.5)});

\draw [yshift = 318]  ({3.7+2*cos(202.5)}, {2*sin(202.5)}) -- ({3.7+2*cos(247.5)}, {2*sin(247.5)}) -- ({3.7+2*cos(292.5)}, {2*sin(292.5)}) -- ({3.7+2*cos(337.5)}, {2*sin(337.5)}) -- ({3.7+2*cos(22.5)}, {-1+2*sin(22.5)});

\draw [yshift = 318] ({7.4+2*cos(202.5)}, {2*sin(202.5)}) -- ({7.4+2*cos(247.5)}, {2*sin(247.5)}) -- ({7.4+2*cos(292.5)}, {2*sin(292.5)}) -- ({7.4+2*cos(337.5)}, {2*sin(337.5)}) -- ({7.4+2*cos(22.5)}, {-1+2*sin(22.5)});

\draw [yshift = 318] ({11.1+2*cos(202.5)}, {2*sin(202.5)}) -- ({11.1+2*cos(247.5)}, {2*sin(247.5)}) -- ({10.4+2*cos(292.5)}, {2*sin(292.5)});

\node[ver] () at (-1.8,-2){\tiny $v_{-3,-1}$};
\node[ver] () at (0,-1.5){\tiny $v_{-2,-1}$};
\node[ver] () at (1.8,-2){\tiny $v_{-1,-1}$};
\node[ver] () at (3.4,-1.5){\tiny $v_{0,-1}$};
\node[ver] () at (5.5,-2){\tiny $v_{1,-1}$};
\node[ver] () at (7.1,-1.5){\tiny $v_{2,-1}$};
\node[ver] () at (9.2,-2){\tiny $v_{3,-1}$};
\node[ver] () at (10.7,-1.5){\tiny $v_{4,-1}$};

\node[ver] () at (-1.8,-1.5+3.3){\tiny $v_{-3,0}$};
\node[ver] () at (0,-1.5+3.6){\tiny $v_{-2,0}$};
\node[ver] () at (1.8,-1.5+3.3){\tiny $v_{-1,0}$};
\node[ver] () at (3.4,-1.5+3.6){\tiny $v_{0,0}$};
\node[ver] () at (5.5,-1.5+3.3){\tiny $v_{1,0}$};
\node[ver] () at (7.1,-1.5+3.6){\tiny $v_{2,0}$};
\node[ver] () at (9.2,-1.5+3.3){\tiny $v_{3,0}$};
\node[ver] () at (10.7,-1.5+3.6){\tiny $v_{4,0}$};

\node[ver] () at (-1.8,5.5){\tiny $v_{-3,1}$};
\node[ver] () at (0,5.8){\tiny $v_{-2,1}$};
\node[ver] () at (1.8,5.5){\tiny $v_{-1,1}$};
\node[ver] () at (3.15,6){\tiny $v_{0,1}$};
\node[ver] () at (5.5,5.5){\tiny $v_{1,1}$};
\node[ver] () at (7.1,5.8){\tiny $v_{2,1}$};
\node[ver] () at (9.2,5.5){\tiny $v_{3,1}$};
\node[ver] () at (10.7,5.8){\tiny $v_{4,1}$};

\node[ver] () at (-1.8,9.3){\tiny $v_{-3,2}$};
\node[ver] () at (-.1,9.6){\tiny $v_{-2,2}$};
\node[ver] () at (1.8,9.3){\tiny $v_{-1,2}$};
\node[ver] () at (3.4,9.6){\tiny $v_{0,2}$};
\node[ver] () at (5.5,9.3){\tiny $v_{1,2}$};
\node[ver] () at (7.1,9.6){\tiny $v_{2,2}$};
\node[ver] () at (9.2,9.3){\tiny $v_{3,2}$};
\node[ver] () at (10.7,9.6){\tiny $v_{4,2}$};


\node[ver] () at (10,3+3.7+3.7){\tiny $u_{2,4}$};
\node[ver] () at (10,.8+3.6+3.7){\tiny $u_{2,3}$};
\node[ver] () at (10,3+3.7){\tiny $u_{2,2}$};
\node[ver] () at (10,.8+3.6){\tiny $u_{2,1}$};
\node[ver] () at (10,3){\tiny $u_{2,0}$};
\node[ver] () at (10.2,.8+3.6-3.7){\tiny $u_{2,-1}$};
\node[ver] () at (10.2,-3.7+3){\tiny $u_{2,-2}$};
\node[ver] () at (10.2,-3.2){\tiny $u_{2,-3}$};

\node[ver] () at (-1+7.2,3+3.7+3.7){\tiny $u_{1,4}$};
\node[ver] () at (-1+7.3,.8+3.6+3.7){\tiny $u_{1,3}$};
\node[ver] () at (-1+7.2,3+3.7){\tiny $u_{1,2}$};
\node[ver] () at (-2.5+7.3,.8+3.6){\tiny $u_{1,1}$};
\node[ver] () at (-1+7.2,3){\tiny $u_{1,0}$};
\node[ver] () at (-1+7.5,.8+3.5-3.7){\tiny $u_{1,-1}$};
\node[ver] () at (-1+7.4,-3.7+3){\tiny $u_{1,-2}$};
\node[ver] () at (6.6,-3.2){\tiny $u_{1,-3}$};

\node[ver] () at (-.9+3.6,3+3.7+3.7){\tiny $u_{0,4}$};
\node[ver] () at (-.9+3.6,.8+3.6+3.7){\tiny $u_{0,3}$};
\node[ver] () at (-2.5+3.6,3+3.7){\tiny $u_{0,2}$};
\node[ver] () at (-.9+3.6,.8+3.6){\tiny $u_{0,1}$};
\node[ver] () at (-.9+3.6,3){\tiny $u_{0,0}$};
\node[ver] () at (-.9+3.7,.8+3.6-3.7){\tiny $u_{0,-1}$};
\node[ver] () at (-.9+3.6,-3.7+3){\tiny $u_{0,-2}$};
\node[ver] () at (2.8,-3.2){\tiny $u_{0,-3}$};

\node[ver] () at (-.8,3+3.7+3.7){\tiny $u_{-2,4}$};
\node[ver] () at (-.8,.8+3.6+3.7){\tiny $u_{-2,3}$};
\node[ver] () at (-.8,3+3.7){\tiny $u_{-2,2}$};
\node[ver] () at (-.8,.8+3.6){\tiny $u_{-2,1}$};
\node[ver] () at (-.8,3){\tiny $u_{-2,0}$};
\node[ver] () at (-.7,.5){\tiny $u_{-2,-1}$};
\node[ver] () at (-.7,-.7){\tiny $u_{-2,-2}$};
\node[ver] () at (-.7,-3.2){\tiny $u_{-2,-3}$};

\draw [thick, dotted] (3.7,3.7) -- (7.4,3.7);
\draw [thick, dotted] (3.7,3.7) -- (3.7,7.4);

\node[ver] () at (3.7,3.7){$\bullet$};
\node[ver] () at (7.4,3.7){$\bullet$};
\node[ver] () at (3.7,7.4){$\bullet$}; 

\node[ver] () at (6.8,3.7){$\mathbf{>}$};
\node[ver] () at (6.8,4.2){ ${A_{1}}$}; 

\node[ver] () at (3.7,6.8){$\mathbf{\wedge}$};
\node[ver] () at (4.3,6.8){ ${B_{1}}$};

\node[ver] () at (3.5, -4.5){\normalsize (a): Truncated square tiling ${E}_{1}$};

\end{tikzpicture}\hfill
\begin{tikzpicture}[scale=0.2]
\begin{scope}[shift={(-15,-7)}]

\draw[edge, thin](5,5)--(10,5)--(10,10)--(5,10)--(5,5);
\draw[edge, thin](15,2.5)--(20,2.5)--(20,7.5)--(15,7.5)--(15,2.5);
\draw[edge, thin](25,0)--(30,0)--(30,5)--(25,5)--(25,0);
\draw[edge, thin](10,5)--(15,2.5);
\draw[edge, thin](10,10)--(15,7.5);
\draw[edge, thin](10,5)--(15,7.5);
\draw[edge](4,5.5)--(5,5)--(10,5)--(15,2.5)--(20,2.5)--(25,0)--(30,0)--(31,-0.5);

\draw[edge](12,-8.5)--(12.5,-7.5)--(12.5,-2.5)--(15,2.5)--(15,7.5)--(17.5,12.5)--(17.5,17.5)--(18,18.5);

\node[ver] () at (10.3,-5.2){\scriptsize $u_{-1,-2}$};
\node[ver] () at (15,-7){\scriptsize $u_{0,-2}$};
\node[ver] () at (20.2,-7.7){\scriptsize $u_{1,-2}$};

\node[ver] () at (10.2,0){\scriptsize $u_{-1,-1}$};
\node[ver] () at (15.1,-3.5){\scriptsize $u_{0,-1}$};
\node[ver] () at (19.8,-2.7){\scriptsize $u_{1,-1}$};
\node[ver] () at (24.9,-4.5){\scriptsize $u_{2,-1}$};
\node[ver] () at (29.7,-5){\scriptsize $u_{3,-1}$};

\node[ver] () at (7,5.5){\scriptsize $u_{-2,0}$};
\node[ver] () at (13,4.8){\scriptsize $u_{-1,0}$};
\node[ver] () at (13,1.8){\scriptsize $u_{0,0}$};
\node[ver] () at (22.5,2.3){\scriptsize $u_{1,0}$};
\node[ver] () at (26.7,.5){\scriptsize $u_{2,0}$};
\node[ver] () at (32,-.2){\scriptsize $u_{3,0}$};

\node[ver] () at (7.5,10.5){\scriptsize $u_{-2,1}$};
\node[ver] () at (12.5,10){\scriptsize $u_{-1,1}$};
\node[ver] () at (17,8){\scriptsize $u_{0,1}$};
\node[ver] () at (22,7.3){\scriptsize $u_{1,1}$};
\node[ver] () at (27,5.5){\scriptsize $u_{2,1}$};
\node[ver] () at (32,4.8){\scriptsize $u_{3,1}$};

\node[ver] () at (10,14.2){\scriptsize $u_{-2,2}$};
\node[ver] () at (15.5,14.6){\scriptsize $u_{-1,2}$};
\node[ver] () at (19.6,11.7){\scriptsize $u_{0,2}$};
\node[ver] () at (25,12.2){\scriptsize $u_{1,2}$};
\node[ver] () at (29.1,10.6){\scriptsize $u_{2,2}$};

\node[ver] () at (19.2,16.7){\scriptsize $u_{0,3}$};
\node[ver] () at (24.7,17.3){\scriptsize $u_{1,3}$};
\node[ver] () at (29.1,14){\scriptsize $u_{2,3}$};

 \draw[edge, thin](20,2.5)--(25,0);
 \draw[edge, thin](20,7.5)--(25,5);
 \draw[edge, thin](20,2.5)--(25,5);
 \draw[edge, thin](30,5)--(31,7);
 \draw[edge, thin](30,5)--(31,4.5);
 \draw[edge, thin](30,0)--(31,0.5);
 \draw[edge, thin](5,10)--(4,10.4);

 \draw[edge, thin](10,10)--(7.5,15)--(5,10);
 \draw[edge, thin](10,10)--(7.5,15)--(12.5,15)--(10,10);
 \draw[edge, thin](20,7.5)--(17.5,12.5)--(15,7.5);
 \draw[edge, thin](20,7.5)--(17.5,12.5)--(22.5,12.5)--(20,7.5);
 \draw[edge, thin](30,5)--(27.5,10)--(25,5);
 \draw[edge, thin](12.5,15)--(17.5,12.5);
 \draw[edge, thin](22.5,12.5)--(27.5,10);
 \draw[edge, thin](17.5,17.5)--(12.5,15);
\draw[edge, thin](17.5,12.5)--(22.5,12.5)--(22.5,17.5)--(17.5,17.5)--(17.5,12.5);
 \draw[edge, thin](22.5,17.5)--(27.5,15)--(22.5,12.5);
 \draw[edge, thin](27.5,15)--(27.5,10);
 \draw[edge, thin](27.5,15)--(31,15);
 \draw[edge, thin](27.5,10)--(31,10);
 \draw[edge, thin](7.5,15)--(5.5,16);
 \draw[edge, thin](12.5,15)--(12.5,18);
 \draw[edge, thin](17.5,17.5)--(15.5,18.5);
 \draw[edge, thin](5,10)--(4,9.5);
 \draw[edge, thin](5,5)--(3.5,3);
 \draw[edge, thin](7.5,15)--(7.5,18);
 \draw[edge, thin](22.5,17.5)--(23.5,19);
 \draw[edge, thin](22.5,17.5)--(21.5,19);
 \draw[edge, thin](27.5,15)--(28.5,17);

 \draw[edge, thin](10,5)--(7.5,0)--(5,5);
 \draw[edge, thin](10,5)--(7.5,0)--(12.5,-2.5)--(15,2.5);
 \draw[edge, thin](20,2.5)--(17.5,-2.5)--(15,2.5);
 \draw[edge, thin](17.5,-2.5)--(12.5,-2.5);
 \draw[edge, thin](30,0)--(27.5,-5)--(25,0);
 \draw[edge, thin](27.5,-5)--(22.5,-5)--(25,0);
 \draw[edge, thin](22.5,-5)--(17.5,-2.5);
\draw[edge, thin](17.5,-2.5)--(12.5,-2.5)--(12.5,-7.5)--(17.5,-7.5)--(17.5,-2.5);
 \draw[edge, thin](22.5,-5)--(17.5,-7.5);
 \draw[edge, thin](12.5,-2.5)--(12.5,-7.5)--(7.5,-5)--(12.5,-2.5);
 \draw[edge, thin](7.5,-5)--(7.5,0);
 \draw[edge, thin](7.5,-5)--(6.5,-7);
 \draw[edge, thin](7.5,-5)--(4.5,-5);
 \draw[edge, thin](7.5,0)--(4.5,0);
 \draw[edge, thin](22.5,-5)--(22.5,-7.5);
 \draw[edge, thin](17.5,-7.5)--(19.5,-8.5);
 \draw[edge, thin](17.5,-7.5)--(17,-8.5);
 \draw[edge, thin](12.5,-7.5)--(13,-8.5);
 \draw[edge, thin](27.5,-5)--(27.5,-7);
 \draw[edge, thin](27.5,-5)--(29.5,-6);

 \node[ver] () at (17.5,5){\scriptsize $\bullet$};
\node[ver] () at (27.5,2.5){\scriptsize $\bullet$};
\node[ver] () at (7.5,7.5){\scriptsize $\bullet$};

\node[ver] () at (15,-5){\scriptsize $\bullet$};
\node[ver] () at (20,15){\scriptsize $\bullet$};
 \draw [dashed] (3.5,8.5) -- (31.5,1.5);
\draw [dashed] (14,-9) -- (21,19); 

\node[ver] () at (26.5,2.8){$\mathbf{>}$}; 
\node[ver] () at (19.8,14){$\mathbf{\wedge}$}; 

\node[ver] () at (15,-12){\normalsize (b): Snub square tiling $E_2$};


\end{scope}
\end{tikzpicture}
\vspace{-5mm}
\caption{}
\end{figure}

Let $\rho_1$ be the function obtained by  90 degrees anticlockwise rotation. Then $\rho_1\in {\rm Aut}(E_{1})$ and $\rho_1(A_1) = B_1$ and $\rho_1(B_1) = -A_1$. 
Consider the group ($\leq {\rm Aut}(E_{1})$) 
\begin{align*}
  {\mathcal G}_{1} & := \langle \alpha_1, \beta_1, \rho_1 \rangle.
\end{align*}

Clearly,  vertices of $E_{1}$ form four ${\mathcal H}_{1}$-orbits. These are  $\mbox{Orb}(u_{0,0}) = \mbox{Orb}(u_{1,0})$,   $\mbox{Orb}(v_{1,0}) = \mbox{Orb}(v_{1,1})$,  $\mbox{Orb}(u_{0,1}) = \mbox{Orb}(u_{1,1}) $ and $\mbox{Orb}(v_{0,0}) = \mbox{Orb}(v_{0,1})$. 
Since $\rho_1(u_{1,0})= v_{1,1}$, $\rho_1(v_{1,1})= u_{0,1}$, $\rho_1(u_{0,1})= v_{0,0}$, it follows that the vertices of $E_1$ form one ${\mathcal G}_1$-orbit. So, ${\mathcal G}_1$ acts vertex-transitively on $E_1$.

Since ${\mathcal K}_1\leq {\mathcal H}_1$,  it follows that ${\mathcal K}_1 = \langle \gamma_1 : z\mapsto z+C_1, \, \delta_1 : z\mapsto z+D_1 \rangle$, where $C_1=aA_1+bB_1$ and $D_1 =cA_1 + dB_1$, for some $a, b, c, d\in \mathbb{Z}$. 

\medskip

\noindent {\bf Claim 1.} There exists a factor $m$ of $|ad-bc|$ such that ${\mathcal L}_1:= \langle \alpha_1^m, \alpha_2^m\rangle \leq {\mathcal K}_1$. 

\medskip

Since $E_1/{\mathcal K}_1$ is compact, $C_1$ and $D_1$ are linearly independent. Therefore, there exists $r, s, t, u\in \mathbb{Q}$ such that $A_1=rC_1+sD_1$ and $B_1 = tC_1 +uD_1$.  Suppose, $r=r_1/r_2$, $s=s_1/s_2$, $t=t_1/t_2$, $u=u_1/u_2$, where $r_1$, $r_2$, $s_1$, $s_2$, $t_1$, $t_2$, $u_1$, $u_2$ are integers and $r_2, s_2, t_2, u_2>0$. (Also, if $r\neq 0$ then $r_1$ and $r_2$ are relatively prime. Similarly, for $s, t, u$.) Let $m := \mbox{lcm}\{r_2, s_2, t_2, u_2\}$.  Then $mr, ms, mt, mu\in \mathbb{Z}$  and $mA_1 = (mr)C_1+(ms)D_1$, $mB_1 = (mt)C_1 +(mu)D_1$. Therefore, $\alpha_1^m(z) = z+mA_1 = (\gamma_1^{mr}\circ\delta_1^{ms})(z)$, 
$\beta_1^m(z) = z+mB_1 = (\gamma_1^{mt}\circ\delta_1^{mu})(z)$ and hence $\alpha_1^m, \beta_1^m\in {\mathcal K}_1$.   Observe that $(r,s,t,u) = \frac{1}{ad-bc}(d, -b, -c, a)$. So, $r_2, s_2, t_2, u_2$ are factors of $|ad-bc|$ and hence $m$ is a factor of $|ad-bc|$. (Observe that $m$ is the smallest positive integer so that  $mr, ms, mt, mu$ are integers.) This proves Claim 1. 

\medskip

Since ${\mathcal H}_1$ is abelian, we have ${\mathcal L}_1\unlhd {\mathcal K}_1\unlhd {\mathcal H}_1\leq {\mathcal G}_1\leq {\rm Aut}(E_1)$. 

\medskip

\noindent {\bf Claim 2.}  ${\mathcal L}_1 \unlhd {\mathcal G}_{1}$.

\smallskip

For $x, y\in \mathbb{R}$ and $p\in \mathbb{Z}$,  $(\rho_1\circ\alpha_1^p\circ\rho_1^{-1})(xA_1+ yB_1) = (\rho_1\circ\alpha_1^p)(yA_1 - xB_1) = \rho_1((yA_1 - xB_1) + pA_1)
= \rho_1((p+  y)A_1 - xB_1)= (p + y)B_1 + xA_1 = (xA_1 + yB_1) + pB_1 = \alpha_2^p(xA_1 + yB_1)$. Thus, $\rho_1\circ\alpha_1^p\circ\rho_1^{-1} = \alpha_2^p$. 
Again, for $x, y\in \mathbb{R}$, $(\rho_1\circ\alpha_2^p\circ\rho_1^{-1})(xA_1 + yB_1) = (\rho_1\circ\alpha_2^p)(yA_1 - xB_1) = \rho_1((yA_1- xB_1) +pB_1) 
= \rho_1(yA_1 + (p - x)B_1) = yB_1 - (p - x)A_1 = (xA_1 + yB_1) -pA_1 = \alpha_1^{-p}(xA_1 + yB_1)$. Thus, $\rho_1\circ\alpha_2^p\circ\rho_1^{-1} = \alpha_1^{-p}$. 
In particular, $\rho_1\circ\alpha_1^m\circ\rho_1^{-1} = \alpha_2^m$, $\rho_1\circ\alpha_2^m\circ\rho_1^{-1} = \alpha_1^{-m} \in {\mathcal L}_1$. Since $\alpha_1$ and $\alpha_2$ commute, these prove  Claim 2.
(If we take $p=1$, we get ${\mathcal H}_1 \unlhd {\mathcal G}_1$.) 

\smallskip

By Claim 2, ${\mathcal G}_1/{\mathcal L}_1$ is a group. 
Since ${\mathcal L}_1 \unlhd {\mathcal K}_1$,  $X=E_1/{\mathcal K}_1$ is a toroidal map and ${\mathcal L}_1$ is generated by two independent vectors, it follows that $Y := E_1/{\mathcal L}_1$ is a toroidal map and $v+ {\mathcal L}_1 \mapsto v + {\mathcal K}_1$ gives a (natural) covering $\eta: Y \to X$. Since the action of ${\mathcal G}_1$ on $E_1$ is vertex-transitive, it follows that  the action of ${\mathcal G}_1/{\mathcal L}_1$ on $Y = E_1/{\mathcal L}_1$ is also vertex-transitive. This proves  the result when the vertex-type of $X$ is $[4^1,8^2]$. 

\medskip 

\noindent {\sf Case 2.}  Now, assume that the vertex-type of $X$ is $[3^2,4^1,3^1,4^1]$. We take $(0,0)$ as the middle point of $u_{0,0}$ and $u_{1,1}$ of $E_2$ and  $A_2 = u_{2,0} - u_{0,0}$, $B_2 = u_{0,2}- u_{0,0}$  (see Fig. 1 (b)). The result follows in this case by exactly same arguments as Case 1. 

\medskip 

\noindent {\sf Case 3.}  Next assume that the vertex-type of $X$ is $[3^4, 6^1]$.  By Proposition \ref{prop:Archi}, we can assume that $X = E_{3}/{\mathcal K}_{3}$ for some subgroup ${\mathcal K}_{3}$ of ${\rm Aut}(E_{3})$. By the similar arguments as above, ${\mathcal K}_{3}$ is a subgroup of ${\mathcal H}_{3}$ and  $\eta_{3} : E_{3}\to X$ is a covering.

We take $(0,0)$ as the middle point of the line segment joining vertices $u_{-1,0}$ and $u_{1,0}$ of $E_{3}$ (see Fig. 2). Let $A_3 := u_{2,-1} - u_{-1,0}$, $B_3 := u_{0,2} - u_{-1,0}$, $F_3 := u_{-3,3} - u_{-1,0} \in \mathbb{R}^2$. Then $A_3 + F_3 = B_3$ and hence ${\mathcal H}_{3} = \langle \alpha_3 : z \mapsto z + A_3, \, \beta_3 : z \mapsto z + B_3 \rangle$.


\setlength{\unitlength}{2.3mm}
\begin{picture}(58,26)(-6,-8)

\thinlines

\put(-6,-3){\line(1,0){17}}\put(19,-3){\line(1,0){15}}
\put(-6,0){\line(1,0){7}}\put(9,0){\line(1,0){20}}
\put(-1,3){\line(1,0){20}}\put(27,3){\line(1,0){7}}
\put(-6,6){\line(1,0){15}}\put(17,6){\line(1,0){17}}
\put(-6,9){\line(1,0){5}}\put(7,9){\line(1,0){20}}
\put(-6,12){\line(1,0){23}}\put(25,12){\line(1,0){9}}
\put(-6,15){\line(1,0){13}}\put(15,15){\line(1,0){19}}

\put(10.7,14.85){\mbox{${}_\bullet$}} 
\put(12.7,5.85){\mbox{${}_\bullet$}} 
\put(20.7,11.85){\mbox{${}_\bullet$}} 
\put(22.7,2.85){\mbox{${}_\bullet$}} 

\put(11.7,14.85){\mbox{\small $F_3$}} 
\put(21.7,11.85){\mbox{\small $B_3$}} 
\put(23.7,2.85){\mbox{\small $A_3$}} 


\put(13.27,5.45){\mbox{$\cdot$}} 
\put(13.77,5.3){\mbox{$\cdot$}} 
\put(14.27,5.15){\mbox{$\cdot$}} 
\put(14.77,5){\mbox{$\cdot$}} 
\put(15.27,4.85){\mbox{$\cdot$}} 
\put(15.77,4.7){\mbox{$\cdot$}} 
\put(16.27,4.55){\mbox{$\cdot$}} 
\put(16.77,4.4){\mbox{$\cdot$}} 
\put(17.27,4.25){\mbox{$\cdot$}} 
\put(17.77,4.1){\mbox{$\cdot$}} 
\put(18.27,3.95){\mbox{$\cdot$}} 
\put(18.77,3.8){\mbox{$\cdot$}} 
\put(19.27,3.65){\mbox{$\cdot$}} 
\put(19.77,3.5){\mbox{$\cdot$}} 
\put(20.27,3.35){\mbox{$\cdot$}} 
\put(20.77,3.2){\mbox{$\cdot$}} 
\put(21.27,3.05){\mbox{$\cdot$}} 
\put(21.77,2.9){\mbox{$\cdot$}} 
\put(22.27,2.75){\mbox{$\cdot$}} 

\put(13.17,5.9){\mbox{$\cdot$}} 
\put(13.57,6.2){\mbox{$\cdot$}} 
\put(13.97,6.5){\mbox{$\cdot$}} 
\put(14.37,6.8){\mbox{$\cdot$}} 
\put(14.77,7.1){\mbox{$\cdot$}} 
\put(15.17,7.4){\mbox{$\cdot$}} 
\put(15.57,7.7){\mbox{$\cdot$}} 
\put(15.97,8){\mbox{$\cdot$}} 
\put(16.37,8.3){\mbox{$\cdot$}} 
\put(16.77,8.6){\mbox{$\cdot$}} 
\put(17.17,8.9){\mbox{$\cdot$}} 
\put(17.57,9.2){\mbox{$\cdot$}} 
\put(17.97,9.5){\mbox{$\cdot$}} 
\put(18.37,9.8){\mbox{$\cdot$}} 
\put(18.77,10.1){\mbox{$\cdot$}} 
\put(19.17,10.4){\mbox{$\cdot$}} 
\put(19.57,10.7){\mbox{$\cdot$}} 
\put(19.97,11){\mbox{$\cdot$}} 
\put(20.37,11.3){\mbox{$\cdot$}} 

\put(10.87,14.15){\mbox{$\cdot$}} 
\put(10.97,13.7){\mbox{$\cdot$}} 
\put(11.07,13.25){\mbox{$\cdot$}} 
\put(11.17,12.8){\mbox{$\cdot$}} 
\put(11.27,12.35){\mbox{$\cdot$}} 
\put(11.37,11.9){\mbox{$\cdot$}} 
\put(11.47,11.45){\mbox{$\cdot$}} 
\put(11.57,11){\mbox{$\cdot$}} 
\put(11.67,10.55){\mbox{$\cdot$}} 
\put(11.77,10.1){\mbox{$\cdot$}} 

\put(11.87,9.65){\mbox{$\cdot$}} 
\put(11.97,9.2){\mbox{$\cdot$}} 
\put(12.07,8.75){\mbox{$\cdot$}} 
\put(12.17,8.3){\mbox{$\cdot$}} 
\put(12.27,7.85){\mbox{$\cdot$}} 
\put(12.37,7.4){\mbox{$\cdot$}} 
\put(12.47,6.95){\mbox{$\cdot$}} 
\put(12.57,5.5){\mbox{$\cdot$}} 
\put(12.67,6.05){\mbox{$\cdot$}} 


\put(-5.65,14){\line(2,3){1.3}}
\put(-5.65,8){\line(2,3){5.3}}
\put(-3,6){\line(2,3){6.6}}
\put(-5.65,-4){\line(2,3){6.65}}\put(5,12){\line(2,3){2.5}}
\put(-1.65,-4){\line(2,3){10.65}}
\put(2,-4.5){\line(2,3){1}}\put(7,3){\line(2,3){8.7}}
\put(6.35,-4){\line(2,3){4.7}} \put(15,9){\line(2,3){4.7}}
\put(10.35,-4){\line(2,3){8.65}} \put(23,15){\line(2,3){1}}
\put(17,0){\line(2,3){10.7}}
\put(18.35,-4){\line(2,3){2.7}} \put(25,6){\line(2,3){6.5}}
\put(22.35,-4){\line(2,3){6.7}} \put(33,12){\line(2,3){1}}
\put(26.35,-4){\line(2,3){7.3}}
\put(30,-4.5){\line(2,3){1}}

\put(-4.35,-4){\line(-2,3){1.5}}
\put(-.35,-4){\line(-2,3){2.7}}
\put(3.65,-4){\line(-2,3){9.5}}
\put(8,-4.5){\line(-2,3){1}} \put(3,3){\line(-2,3){8.5}}
\put(11.65,-4){\line(-2,3){6.7}}\put(1,12){\line(-2,3){2.7}}
\put(13,0){\line(-2,3){10.7}}
\put(19.65,-4){\line(-2,3){4.7}}\put(11,9){\line(-2,3){4.7}}
\put(23.65,-4){\line(-2,3){10.7}}
\put(27.65,-4){\line(-2,3){2.7}}\put(21,6){\line(-2,3){6.7}}
\put(31.65,-4){\line(-2,3){8.7}} \put(19,15){\line(-2,3){1}}
\put(31,3){\line(-2,3){8.7}}
\put(29,12){\line(-2,3){2.7}} \put(34,4.5){\line(-2,3){1}}
\put(34,10.5){\line(-2,3){3.6}}

 \put(-4.5,-2.6){\mbox{\tiny $u_{-3,-3}$}}
 \put(-.5,-2.6){\mbox{\tiny $u_{-2,-3}$}}
 \put(3.5,-2.6){\mbox{\tiny $u_{-1,-3}$}}
 \put(7.5,-2.6){\mbox{\tiny $u_{0,-3}$}}
 \put(11.5,-2.6){\mbox{\tiny $u_{1,-3}$}}
 \put(19.5,-2.6){\mbox{\tiny $u_{3,-3}$}}
 \put(23.5,-2.6){\mbox{\tiny $u_{4,-3}$}}
 \put(27.5,-2.6){\mbox{\tiny $u_{5,-3}$}}
 \put(31,-2.6){\mbox{\tiny $u_{6,-3}$}}

 \put(-2.5,.4){\mbox{\tiny $u_{-3,-2}$}}
 \put(1.5,.4){\mbox{\tiny $u_{-2,-2}$}}
 \put(9.5,.4){\mbox{\tiny $u_{0,-2}$}}
 \put(13.5,.4){\mbox{\tiny $u_{1,-2}$}}
 \put(17.5,.4){\mbox{\tiny $u_{2,-2}$}}
 \put(21.5,.4){\mbox{\tiny $u_{3,-2}$}}
 \put(25.5,.4){\mbox{\tiny $u_{4,-2}$}}
 \put(29.5,.4){\mbox{\tiny $u_{5,-2}$}}

 \put(-.5,3.4){\mbox{\tiny $u_{-3,-1}$}}
 \put(3.5,3.4){\mbox{\tiny $u_{-2,-1}$}}
 \put(7.5,3.4){\mbox{\tiny $u_{-1,-1}$}}
 \put(11,3.4){\mbox{\tiny $u_{0,-1}$}}
 \put(15.5,3.4){\mbox{\tiny $u_{1,-1}$}}
 \put(19.5,2.5){\mbox{\tiny $u_{2,-1}$}}
 \put(27.5,3.4){\mbox{\tiny $u_{4,-1}$}}
 \put(31.5,3.4){\mbox{\tiny $u_{5,-1}$}}

 \put(-2.5,6.4){\mbox{\tiny $u_{-4,0}$}}
 \put(1 ,6.4){\mbox{\tiny $u_{-3,0}$}}
 \put(5.6,6.4){\mbox{\tiny $u_{-2,0}$}}
 \put(9.6,6.4){\mbox{\tiny $u_{-1,0}$}}
 \put(17.4,6.4){\mbox{\tiny $u_{1,0}$}}
 \put(21.4,6.4){\mbox{\tiny $u_{2,0}$}}
 \put(25.4,6.4){\mbox{\tiny $u_{3,0}$}}
 \put(29 ,6.4){\mbox{\tiny $u_{4,0}$}}
 \put(33.4,6.4){\mbox{\tiny $u_{5,0}$}}

 \put(-4.2,9.4){\mbox{\tiny $u_{-5,1}$}}
 \put(-.3,9.4){\mbox{\tiny $u_{-4,1}$}}
 \put(7.7,9.4){\mbox{\tiny $u_{-2,1}$}}
 \put(11.7,9.4){\mbox{\tiny $u_{-1,1}$}}
 \put(15.6,9.4){\mbox{\tiny $u_{0,1}$}}
 \put(19,9.4){\mbox{\tiny $u_{1,1}$}}
 \put(23.6,9.4){\mbox{\tiny $u_{2,1}$}}
 \put(27.6,9.4){\mbox{\tiny $u_{3,1}$}}

 \put(-2.4,12.4){\mbox{\tiny $u_{-5,2}$}}
 \put(1.6,12.4){\mbox{\tiny $u_{-4,2}$}}
 \put(5.6,12.4){\mbox{\tiny $u_{-3,2}$}}
 \put(9.3,12.4){\mbox{\tiny $u_{-2,2}$}}
 \put(13.6,12.4){\mbox{\tiny $u_{-1,2}$}}
 \put(17.6,12.4){\mbox{\tiny $u_{0,2}$}}
 \put(25.6,12.4){\mbox{\tiny $u_{2,2}$}}
 \put(29.6,12.4){\mbox{\tiny $u_{3,2}$}}
 \put(33.6,12.4){\mbox{\tiny $u_{4,2}$}}

 \put(-4.4,15.4){\mbox{\tiny $u_{-6,3}$}}
 \put(-.4,15.4){\mbox{\tiny $u_{-5,3}$}}
 \put(3.6,15.4){\mbox{\tiny $u_{-4,3}$}}
 \put(7.6,15.4){\mbox{\tiny $u_{-3,3}$}}
 \put(15.6,15.4){\mbox{\tiny $u_{-1,3}$}}
 \put(19.6,15.4){\mbox{\tiny $u_{0,3}$}}
 \put(23.6,15.4){\mbox{\tiny $u_{1,3}$}}
 \put(27.6,15.4){\mbox{\tiny $u_{2,3}$}}
 \put(31.6,15.4){\mbox{\tiny $u_{3,3}$}}


\put(6,-6.5) {Figure 2:  Snub hexagonal tiling $E_3$}
\end{picture}

Let $\rho_3$ be the function  obtained by  60 degrees anticlockwise rotation. Then $\rho_3 \in {\rm Aut}(E_{3})$ and  
$\rho_3(A_3) = B_3$, $\rho_3(B_3) = F_3$ and $\rho_3(F_3) = - A_3$. 
Consider the  group  ($\leq {\rm Aut}(E_{3})$)
\begin{align*}
  {\mathcal G}_{3} & := \langle \alpha_3, \beta_3, \rho_3 \rangle.
\end{align*}

Clearly,  vertices of $E_{3}$ form six ${\mathcal H}_{3}$-orbits. These are  $\mbox{Orb}(u_{1,0})$,   $\mbox{Orb}(u_{0,1})$, $\mbox{Orb}(u_{-1,1})$,    $\mbox{Orb}(u_{-1,0})$ , $\mbox{Orb}(u_{0,-1})$ and   $\mbox{Orb}(u_{1,-1})$. 
Since $\rho_3(u_{1,0})= u_{0,1}$, $\rho_3(u_{0,1})= u_{-1,1}$, $\rho_3(u_{-1,1})= u_{-1,0}$, $\rho_3(u_{-1,0}) = u_{0,-1}$, $\rho_3(u_{0,-1})= u_{1,-1}$  (see Fig. 2), it follows that vertices of $E_3$ form one ${\mathcal G}_3$-orbit. So, ${\mathcal G}_3$ acts vertex-transitively on $E_3$.

Since ${\mathcal K}_3\leq {\mathcal H}_3$,  ${\mathcal K}_3 = \langle \gamma_3 : z \mapsto z + C_3, \, \delta_3 : z \mapsto z + D_3 \rangle$, where $C_3 = aA_3 + bB_3$ and $D_3 = cA_3 + dB_3$, for some $a, b, c, d\in \mathbb{Z}$. 

\medskip

\noindent {\bf Claim 3.} There exists a factor $m$ of $|ad-bc|$ such that ${\mathcal L}_3 := \langle \alpha_3^m, \beta_3^m \rangle \leq {\mathcal K}_3$. 

\medskip

Since $E_3/{\mathcal K}_3$ is compact, $C_3$ and $D_3$ are linearly independent. Therefore,  there exists $r, s, t, u\in \mathbb{Q}$ such that $A_3 = rC_3 + sD_3$ and $B_3 = tC_3 + uD_3$.  As in the proof of Claim 1, let $m$ be the smallest positive integer such that  $mr, ms, mt, mu\in \mathbb{Z}$. Then $m$ is a factor of $|ad-bc|$ and $mA_3 = (mr)C_3 + (ms)D_3$, $mB_3 = (mt)C_3 + (mu)D_3$. Thus, $\alpha_3^m(z) = z + mA_3 = (\gamma_3^{mr}\circ\delta_3^{ms})(z)$, 
$\beta_3^m(z) = z+mB_3 = (\gamma_3^{mt}\circ\delta_3^{mu})(z)$ and hence $\alpha_3^m, \beta_3^m\in {\mathcal K}_3$.  This proves Claim 3. 

\medskip

Again, we have ${\mathcal L}_3\unlhd {\mathcal K}_3\unlhd {\mathcal H}_3\leq {\mathcal G}_3\leq {\rm Aut}(E_3)$. 

\medskip

\noindent {\bf Claim 4.}  ${\mathcal L}_3 \unlhd {\mathcal G}_{3}$.

\smallskip

For $x, y\in \mathbb{R}$,  $(\rho_3\circ\alpha_3^m\circ \rho_3^{-1})(xA_3+ yB_3) = ( \rho_3\circ\alpha_3^m)(x(-F_3)+yA_3) = 
( \rho_3\circ\alpha_3^m)(x(A_3-B_3)+yA_3) = ( \rho_3\circ\alpha_3^m)((x+y)A_3 - xB_3) = 
 \rho_3((x+y)A_3 - xB_3+ mA_3) =  \rho_3((m+x+y)A_3 - xB_3) =  (m+x+y)B_3 -xF_3 = (m+x+y)B_3 - x(B_3- A_3) = (xA_3 + yB_3) + mB_3 = \beta_3^m(xA_3 + yB_3)$. Thus, $\rho_3\circ\alpha_3^m\circ \rho_3^{-1} = \beta_3^m\in {\mathcal L}_3$. 
 Again,  for $x, y\in \mathbb{R}$,  $(\rho_3\circ\beta_3^m\circ \rho_3^{-1})(xA_3+ yB_3) = ( \rho_3\circ\beta_3^m)(x(- F_3) + yA_3) = ( \rho_3\circ\beta_3^m)(x(A_3 - B_3) + yA_3) = ( \rho_3\circ\beta_3^m)((x + y)A_3 - xB_3) = \rho_3((x + y)A_3 - xB_3 + mB_3) =  \rho_3((x+y)A_3 +(m- x)B_3) =  (x+y)B_3 +(m-x)F_3 = (x+y)B_3 +(m- x)(B_3- A_3) = (xA_3 + yB_3) - mA_3 = \alpha_3^{-m}(xA_3 + yB_3)$. Thus, $\ \rho_3\circ\beta_3^m\circ \rho_3^{-1} = \alpha_3^{-m} \in {\mathcal L}_3$. 
Since $\alpha_1$ and $\alpha_2$ commute, these prove  Claim 4.

\smallskip

By Claim 4, ${\mathcal G}_3/{\mathcal L}_3$ is a group. 
Since ${\mathcal L}_3 \unlhd {\mathcal K}_3$, $X = E_3/{\mathcal K}_3$ is a toroidal map 
and ${\mathcal L}_3$ is generated by two independent vectors, it follows that $Y:= E_3/{\mathcal L}_3$ is a toroidal map and $v+ {\mathcal L}_3 \mapsto v + {\mathcal K}_3$ gives a (natural) covering $\eta: Y \to X$. Since the action of ${\mathcal G}_3$ on $E_3$ is vertex-transitive, it follows that  the action of ${\mathcal G}_3/{\mathcal L}_3$ on $Y = E_3/{\mathcal L}_3$ is also vertex-transitive. This proves the result when the vertex-type of $X$ is $[3^4,6^1]$. 


\begin{figure}[ht!]

\tiny
\tikzstyle{ver}=[]
\tikzstyle{vert}=[circle, draw, fill=black!100, inner sep=0pt, minimum width=4pt]
\tikzstyle{vertex}=[circle, draw, fill=black!00, inner sep=0pt, minimum width=4pt]
\tikzstyle{edge} = [draw,thick,-]
\centering

\begin{tikzpicture}[scale=.14]

\draw[edge, thin](-1.5,0)--(51.5,0);
\draw[edge, thin](-1.5,10)--(51.5,10);
\draw[edge, thin](-1.5,20)--(51.5,20);
\draw[edge, thin](-1.5,30)--(51.5,30);

\draw[edge, thin](-0.5,19)--(5.5,31);

\draw[edge, thin](-.5,-1)--(15.5,31);
\draw[edge, thin](9.5,-1)--(25.5,31);
\draw[edge, thin](19.5,-1)--(35.5,31);
\draw[edge, thin](29.5,-1)--(45.5,31);
\draw[edge, thin](39.5,-1)--(51.5,23);
\draw[edge, thin](49.5,-1)--(51.5,3);


\draw[edge, thin](5.5,-1)--(-0.5,11);
\draw[edge, thin](15.5,-1)--(-0.5,31);
\draw[edge, thin](25.5,-1)--(9.5,31);
\draw[edge, thin](35.5,-1)--(19.5,31);
\draw[edge, thin](45.5,-1)--(29.5,31);

\draw[edge, thin](50.5,9)--(39.5,31);

\draw[edge, thin](51.5,27)--(49.5,31);

\draw [dashed] (22.5,15) -- (32.5,15);
\draw [dashed] (22.5,15) -- (17.5,25);
\draw [dashed] (22.5,15) -- (27.5,25);

\node[ver] () at (22.5,15){\scriptsize $\bullet$};
\node[ver] () at (32.5,15){\scriptsize $\bullet$};
\node[ver] () at (27.5,25){\scriptsize $\bullet$};
\node[ver] () at (17.5,25){\scriptsize $\bullet$};

\node[ver] () at (2.5,-1.7){\scriptsize ${w_{-2,-1}}$};
\node[ver] () at (7.5,1.2){\scriptsize ${v_{-1,-1}}$};
\node[ver] () at (12.5,-1.7){\scriptsize ${w_{-1,-1}}$};
\node[ver] () at (17.5,1.2){\scriptsize ${v_{0,-1}}$};
\node[ver] () at (22.5,-1.7){\scriptsize ${w_{0,-1}}$};
\node[ver] () at (27.5,1.2){\scriptsize ${v_{1,-1}}$};
\node[ver] () at (32.5,-1.7){\scriptsize ${w_{1,-1}}$};
\node[ver] () at (37.5,1.2){\scriptsize ${v_{2,-1}}$};
\node[ver] () at (42.5,-1.7){\scriptsize ${w_{2,-1}}$};
\node[ver] () at (47.5,1.2){\scriptsize ${v_{3,-1}}$};
\node[ver] () at (52.5,-1.7){\scriptsize ${w_{3,-1}}$};

\node[ver] () at (6.3,5){\scriptsize ${u_{-2,-1}}$};
\node[ver] () at (16.3,5){\scriptsize ${u_{-1,-1}}$};
\node[ver] () at (25.5,5){\scriptsize ${u_{0,-1}}$};
\node[ver] () at (35.5,5){\scriptsize ${u_{1,-1}}$};
\node[ver] () at (45.5,5){\scriptsize ${u_{2,-1}}$};

\node[ver] () at (2.9,11){\scriptsize ${v_{-2,0}}$};
\node[ver] () at (7.5,8.7){\scriptsize ${w_{-2,0}}$};
\node[ver] () at (12.5,11){\scriptsize ${v_{-1,0}}$};
\node[ver] () at (17.5,8.7){\scriptsize ${w_{-1,0}}$};
\node[ver] () at (22,11){\scriptsize ${v_{0,0}}$};
\node[ver] () at (27,8.7){\scriptsize ${w_{0,0}}$};
\node[ver] () at (32,11){\scriptsize ${v_{1,0}}$};
\node[ver] () at (37,8.7){\scriptsize ${w_{1,0}}$};
\node[ver] () at (42,11){\scriptsize ${v_{2,0}}$};
\node[ver] () at (47,8.7){\scriptsize ${w_{2,0}}$};
\node[ver] () at (52,11){\scriptsize ${v_{3,0}}$};

\node[ver] () at (5,15){\scriptsize ${u_{-2,0}}$};
\node[ver] () at (15,15){\scriptsize ${u_{-1,0}}$};
\node[ver] () at (30,13.5){\scriptsize ${u_{0,0}}$};
\node[ver] () at (40.5,15){\scriptsize ${u_{1,0}}$};
\node[ver] () at (50,15){\scriptsize ${u_{2,0}}$};

\node[ver] () at (2.5,18.5){\scriptsize ${w_{-1,1}}$};
\node[ver] () at (8,20.7){\scriptsize ${v_{-2,1}}$};
\node[ver] () at (13,18.5){\scriptsize ${w_{-2,1}}$};
\node[ver] () at (18,20.7){\scriptsize ${v_{-1,1}}$};
\node[ver] () at (23,18.5){\scriptsize ${w_{-1,1}}$};
\node[ver] () at (27.5,20.7){\scriptsize ${v_{0,1}}$};
\node[ver] () at (32,18.5){\scriptsize ${w_{0,1}}$};
\node[ver] () at (37.5,20.7){\scriptsize ${v_{1,1}}$};
\node[ver] () at (42,18.5){\scriptsize ${w_{1,1}}$};
\node[ver] () at (47.5,20.7){\scriptsize ${v_{2,1}}$};
\node[ver] () at (52,18.5){\scriptsize ${w_{2,1}}$};

\node[ver] () at (6,25){\scriptsize ${u_{-3,1}}$};
\node[ver] () at (15.5,24){\scriptsize ${u_{-2,1}}$};
\node[ver] () at (25.5,24){\scriptsize ${u_{-1,1}}$};
\node[ver] () at (35.5,25){\scriptsize ${u_{0,1}}$};
\node[ver] () at (45,25){\scriptsize ${u_{1,1}}$};

\node[ver] () at (2.5,31){\scriptsize ${v_{-3,2}}$};
\node[ver] () at (7.7,28.5){\scriptsize ${w_{-3,2}}$};
\node[ver] () at (12.5,31){\scriptsize ${v_{-2,2}}$};
\node[ver] () at (17.5,28.5){\scriptsize ${w_{-2,2}}$};
\node[ver] () at (22.5,31){\scriptsize ${v_{-1,2}}$};
\node[ver] () at (27.5,28.5){\scriptsize ${w_{-1,2}}$};
\node[ver] () at (32.5,31){\scriptsize ${v_{0,2}}$};
\node[ver] () at (37.5,28.5){\scriptsize ${w_{0,2}}$};
\node[ver] () at (42.5,31){\scriptsize ${v_{1,2}}$};
\node[ver] () at (47.5,28.5){\scriptsize ${w_{1,2}}$};
\node[ver] () at (52.5,31){\scriptsize ${v_{2,2}}$};

\node[ver] () at (28,-6) {\small (a): Trihexagonal tiling $E_4$};

\end{tikzpicture}
\begin{tikzpicture}[scale=0.35]

\draw ({sqrt(3)}, 1) -- (0, 2) -- ({-sqrt(3)}, 1) -- ({-sqrt(3)}, -1) -- (0, -2) -- ({sqrt(3)}, -1) -- ({sqrt(3)}, 1);
\draw ({6+sqrt(3)}, 1) -- (6+0, 2) -- ({6-sqrt(3)}, 1) -- ({6-sqrt(3)}, -1) -- (6+0, -2) -- ({6+sqrt(3)}, -1) -- ({6+sqrt(3)}, 1);
\draw ({12+sqrt(3)}, 1) -- (12+0, 2) -- ({12-sqrt(3)}, 1) -- ({12-sqrt(3)}, -1) -- (12+0, -2) -- ({12+sqrt(3)}, -1) -- ({12+sqrt(3)}, 1);

\draw ({-4.8+sqrt(3)}, 1) -- ({-sqrt(3)}, 1);
\draw ({-4.8+sqrt(3)}, -1) -- ({-sqrt(3)}, -1);
\draw ({sqrt(3)}, 1) -- ({6-sqrt(3)}, 1);
\draw ({sqrt(3)}, -1) -- ({6-sqrt(3)}, -1);
\draw ({6+sqrt(3)}, 1) -- ({12-sqrt(3)}, 1);
\draw ({6+sqrt(3)}, -1) -- ({12-sqrt(3)}, -1);
\draw ({12+sqrt(3)}, 1) -- ({16.8-sqrt(3)}, 1);
\draw ({12+sqrt(3)}, -1) -- ({16.8-sqrt(3)}, -1);

\draw ({-sqrt(3)}, 1) -- (-3+0, 2.6);
\draw (0, 2) -- (-1.25, 3.6);
\draw (0, 2) -- (1.3, 3.6);
\draw ({sqrt(3)}, 1) -- (3.05, 2.6);

\draw ({6-sqrt(3)}, 1) -- (3+0, 2.6);
\draw (6, 2) -- (4.75, 3.6);
\draw (6, 2) -- (7.3, 3.6);
\draw ({6+sqrt(3)}, 1) -- (9.05, 2.6);

\draw ({12-sqrt(3)}, 1) -- (9, 2.6);
\draw (12, 2) -- (10.75, 3.6);
\draw (12, 2) -- (13.3, 3.6);
\draw ({12+sqrt(3)}, 1) -- (15.05, 2.6);


\draw [xshift = 86, yshift = 130] (-6+0, -2) -- ({-6+sqrt(3)}, -1) -- ({-6+sqrt(3)}, 1) -- ({-6+sqrt(3)}, 1) -- (-6+0, 2);
\draw [xshift = 86, yshift = 130] ({sqrt(3)}, 1) -- (0, 2) -- ({-sqrt(3)}, 1) -- ({-sqrt(3)}, -1) -- (0, -2) -- ({sqrt(3)}, -1) -- ({sqrt(3)}, 1);
\draw [xshift = 86, yshift = 130] ({6+sqrt(3)}, 1) -- (6+0, 2) -- ({6-sqrt(3)}, 1) -- ({6-sqrt(3)}, -1) -- (6+0, -2) -- ({6+sqrt(3)}, -1) -- ({6+sqrt(3)}, 1);
\draw [xshift = 86, yshift = 130] (12+0, 2) -- ({12-sqrt(3)}, 1) -- ({12-sqrt(3)}, -1) -- (12+0, -2) ;

\draw [xshift = 86, yshift = 130] ({-6+sqrt(3)}, 1) -- ({-sqrt(3)}, 1);
\draw [xshift = 86, yshift = 130] ({-6+sqrt(3)}, -1) -- ({-sqrt(3)}, -1);
\draw [xshift = 86, yshift = 130] ({sqrt(3)}, 1) -- ({6-sqrt(3)}, 1);
\draw [xshift = 86, yshift = 130] ({sqrt(3)}, -1) -- ({6-sqrt(3)}, -1);
\draw [xshift = 86, yshift = 130] ({6+sqrt(3)}, 1) -- ({12-sqrt(3)}, 1);
\draw [xshift = 86, yshift = 130] ({6+sqrt(3)}, -1) -- ({12-sqrt(3)}, -1);


\draw [yshift = 260] ({sqrt(3)}, 1) -- (0, 2) -- ({-sqrt(3)}, 1) -- ({-sqrt(3)}, -1) -- (0, -2) -- ({sqrt(3)}, -1) -- ({sqrt(3)}, 1);
\draw [yshift = 260] ({6+sqrt(3)}, 1) -- (6+0, 2) -- ({6-sqrt(3)}, 1) -- ({6-sqrt(3)}, -1) -- (6+0, -2) -- ({6+sqrt(3)}, -1) -- ({6+sqrt(3)}, 1);
\draw [yshift = 260]({12+sqrt(3)}, 1) -- (12+0, 2) -- ({12-sqrt(3)}, 1) -- ({12-sqrt(3)}, -1) -- (12+0, -2) -- ({12+sqrt(3)}, -1) -- ({12+sqrt(3)}, 1);

\draw [yshift = 260] ({-4.8+sqrt(3)}, 1) -- ({-sqrt(3)}, 1);
\draw [yshift = 260] ({-4.8+sqrt(3)}, -1) -- ({-sqrt(3)}, -1);
\draw [yshift = 260] ({sqrt(3)}, 1) -- ({6-sqrt(3)}, 1);
\draw [yshift = 260] ({sqrt(3)}, -1) -- ({6-sqrt(3)}, -1);
\draw [yshift = 260] ({6+sqrt(3)}, 1) -- ({12-sqrt(3)}, 1);
\draw [yshift = 260] ({6+sqrt(3)}, -1) -- ({12-sqrt(3)}, -1);
\draw [yshift = 260] ({12+sqrt(3)}, 1) -- ({16.8-sqrt(3)}, 1);
\draw [yshift = 260] ({12+sqrt(3)}, -1) -- ({16.8-sqrt(3)}, -1);

\draw [xshift = 86, yshift = 130] ({-sqrt(3)}, 1) -- (-3+0, 2.6);
\draw [xshift = 86, yshift = 130](0, 2) -- (-1.25, 3.6);
\draw [xshift = 86, yshift = 130](0, 2) -- (1.3, 3.6);
\draw [xshift = 86, yshift = 130]({sqrt(3)}, 1) -- (3.05, 2.6);

\draw [xshift = 86, yshift = 130]({6-sqrt(3)}, 1) -- (3+0, 2.6);
\draw [xshift = 86, yshift = 130](6, 2) -- (4.75, 3.6);
\draw [xshift = 86, yshift = 130](6, 2) -- (7.3, 3.6);
\draw [xshift = 86, yshift = 130]({6+sqrt(3)}, 1) -- (9.05, 2.6);

\draw [xshift = 86, yshift = 130]({12-sqrt(3)}, 1) -- (9, 2.6);
\draw [xshift = 86, yshift = 130](12, 2) -- (10.75, 3.6);
\draw [yshift = 130](-3, 2) -- (-1.7, 3.6);
\draw [yshift = 130]({-3+sqrt(3)}, 1) -- (.05, 2.6);


\draw [xshift = 86, yshift = -130] ({-.8-sqrt(3)}, 2) -- (-3+0, 2.6);
\draw [xshift = 86, yshift = -130](-.8, 3) -- (-1.3, 3.6);
\draw [yshift = -130](-2.2, 3) -- (-1.7, 3.6);
\draw [yshift = -130]({-2.2+sqrt(3)}, 2) -- (.05, 2.6);

\draw [xshift = 256, yshift = -130] ({-.8-sqrt(3)}, 2) -- (-3+0, 2.6);
\draw [xshift = 256, yshift = -130](-.8, 3) -- (-1.3, 3.6);
\draw [xshift = 170,yshift = -130](-2.2, 3) -- (-1.7, 3.6);
\draw [xshift = 170,yshift = -130]({-2.2+sqrt(3)}, 2) -- (.05, 2.6);

\draw [xshift = 426, yshift = -130] ({-.8-sqrt(3)}, 2) -- (-3+0, 2.6);
\draw [xshift = 426, yshift = -130](-.8, 3) -- (-1.3, 3.6);
\draw [xshift = 340,yshift = -130](-2.2, 3) -- (-1.7, 3.6);
\draw [xshift = 340,yshift = -130]({-2.2+sqrt(3)}, 2) -- (.05, 2.6);

\draw [yshift = 260] ({-sqrt(3)}, 1) -- (-2.2+0, 1.6);
\draw [yshift = 260] (0, 2) -- (-0.5, 2.6);
\draw [yshift = 260] (0, 2) -- (.5, 2.6);
\draw [yshift = 260] ({sqrt(3)}, 1) -- (2.2, 1.6);

\draw [xshift = 170, yshift = 260] ({-sqrt(3)}, 1) -- (-2.2+0, 1.6);
\draw [xshift = 170, yshift = 260] (0, 2) -- (-0.5, 2.6);
\draw [xshift = 170, yshift = 260] (0, 2) -- (.5, 2.6);
\draw [xshift = 170, yshift = 260] ({sqrt(3)}, 1) -- (2.2, 1.6);

\draw [xshift = 340, yshift = 260] ({-sqrt(3)}, 1) -- (-2.2+0, 1.6);
\draw [xshift = 340, yshift = 260] (0, 2) -- (-0.5, 2.6);
\draw [xshift = 340, yshift = 260] (0, 2) -- (.5, 2.6);
\draw [xshift = 340, yshift = 260] ({sqrt(3)}, 1) -- (2.2, 1.6);

\node[ver] () at (-.85,-.8){$u_{0,-2}$};
\node[ver] () at (0,-1.4){$v_{0,-2}$};
\node[ver] () at (.85,-.8){$w_{0,-2}$};
\node[ver] () at (.85,.6){$u_{0,-1}$};
\node[ver] () at (0,1.3){$v_{0,-1}$};
\node[ver] () at (-.85,.6){$w_{0,-1}$};

\node[ver] () at (-.85+6,-.8){$u_{1,-2}$};
\node[ver] () at (0+6,-1.4){$v_{1,-2}$};
\node[ver] () at (.85+6,-.8){$w_{1,-2}$};
\node[ver] () at (.85+6,.6){$u_{1,-1}$};
\node[ver] () at (0+6,1.3){$v_{1,-1}$};
\node[ver] () at (-.85+6,.6){$w_{1,-1}$};

\node[ver] () at (-.85+12,-.8){$u_{2,-2}$};
\node[ver] () at (0+12,-1.4){$v_{2,-2}$};
\node[ver] () at (.85+12,-.8){$w_{2,-2}$};
\node[ver] () at (.85+12,.6){$u_{2,-1}$};
\node[ver] () at (0+12,1.3){$v_{2,-1}$};
\node[ver] () at (-.85+12,.6){$w_{2,-1}$};
\node[ver] () at (0-3,-1.5+4.6){$v_{-1,0}$};
\node[ver] () at (.7-3,-.8+4.5){$w_{-1,0}$};
\node[ver] () at (.7-3,.6+4.5){$u_{-1,1}$};
\node[ver] () at (0-3,1.3+4.6){$v_{-1,1}$};

\node[ver] () at (-1+3,-.8+4.5){$u_{0,0}$};
\node[ver] () at (0+3,-1.5+4.6){$v_{0,0}$};
\node[ver] () at (1+3,-.8+4.5){$w_{0,0}$};
\node[ver] () at (5.5,.6+4.5){$u_{0,1}$};
\node[ver] () at (0+3,1.3+4.6){$v_{0,1}$};
\node[ver] () at (.6,.6+4.5){$w_{0,1}$};

\node[ver] () at (-1+9,-.8+4.5){$u_{1,0}$};
\node[ver] () at (0+9,-1.5+4.6){$v_{1,0}$};
\node[ver] () at (1+9,-.8+4.5){$w_{1,0}$};
\node[ver] () at (1+9,.6+4.5){$u_{1,1}$};
\node[ver] () at (0+9,1.3+4.6){$v_{1,1}$};
\node[ver] () at (-1+9,.6+4.5){$w_{1,1}$};

\node[ver] () at (-1+15,-.8+4.5){$u_{2,0}$};
\node[ver] () at (0+15,-1.5+4.6){$v_{2,0}$};
\node[ver] () at (0+15,1.3+4.6){$v_{2,1}$};
\node[ver] () at (-1+15,.6+4.5){$w_{2,1}$};
\node[ver] () at (-.85,-.8+9){$u_{-1,2}$};
\node[ver] () at (-1,7){$v_{-1,2}$};
\node[ver] () at (2.7,-.8+9.2){$w_{-1,2}$};
\node[ver] () at (.85,.6+9){$u_{-1,3}$};
\node[ver] () at (0,1.3+9.1){$v_{-1,3}$};
\node[ver] () at (-.85,.6+9){$w_{-1,3}$};

\node[ver] () at (-.85+5.7,-.8+9.2){$u_{0,2}$};
\node[ver] () at (0+6,-1.5+9.1){$v_{0,2}$};
\node[ver] () at (.85+6,-.8+9){$w_{0,2}$};
\node[ver] () at (.85+6,.6+9){$u_{0,3}$};
\node[ver] () at (0+6,1.3+9.1){$v_{0,3}$};
\node[ver] () at (-.85+6,.6+9){$w_{0,3}$};

\node[ver] () at (-1+12,-.8+9){$u_{1,2}$};
\node[ver] () at (0+12,-1.5+9.1){$v_{1,2}$};
\node[ver] () at (.85+12,-.8+9){$w_{1,2}$};
\node[ver] () at (.85+12,.6+9){$u_{1,3}$};
\node[ver] () at (0+12,1.3+9.1){$v_{1,3}$};
\node[ver] () at (-1+12,.6+9){$w_{1,3}$};

\draw [thick, dotted] (3,4.5) -- (9,4.5);
\draw [thick, dotted] (3,4.5) -- (6,9);
\draw [thick, dotted] (3,4.5) -- (0,9);



\node[ver] () at (3,4.5){$\bullet$};
\node[ver] () at (9,4.5){$\bullet$};
\node[ver] () at (6,9){$\bullet$};
\node[ver] () at (0,9){$\bullet$};

\node[ver] () at (5.2, -4){\normalsize (b): Rhombitrihexagonal tiling $E_5$};

\end{tikzpicture}
\vspace{-2mm}
\caption*{Figure 3}
\end{figure}


\noindent {\sf Cases 4, 5, 6.} By the same arguments  the result follows when the vertex-type of $X$ is $[3^1,6^1,3^1,6^1]$, $[3^1,4^1,6^1,4^1]$ or $[3^1,12^2]$. Here, $X$ is a quotient of $E_4$, $E_5$ or $E_6$ (given in Figures 3 and 4). (Observe that vertices of $E_6$ also form six ${\mathcal H}_6$-orbits.)

\smallskip 

\noindent {\sf Case 7.} Finally, assume that the vertex-type of $X$ is $[4^1,6^1,12^1]$.  By Proposition \ref{prop:Archi}, we can assume that $X = E_{7}/{\mathcal K}_{7}$ for some subgroup ${\mathcal K}_{7}$ of ${\rm Aut}(E_{7})$. By the similar arguments, ${\mathcal K}_{7}$ is a subgroup of ${\mathcal H}_{7}$ and  $\eta_{7} : E_{7}\to X$ is a covering. 
We take $(0,0)$ as the middle point of $u_{0,0}$ and $u_{0,1}$ of $E_7$ and  $A_7 = u_{1,0} - u_{0,0}$, $B_7 = u_{0,2}- u_{0,0}$,   $F_7 = u_{-1,2}- u_{0,0}$  (see Fig. 4 (b)). Then $A_7 + F_7 = B_7$ and hence ${\mathcal H}_7 = \langle \alpha_7 : z \mapsto z + A_7, \, \beta_7 : z \mapsto z + B_7\rangle$. By similar argument as in Claim 3, there exists a factor $m$ of $|ad-bc|$ such that ${\mathcal L}_7 := \langle \alpha_7^m, \beta_7^m \rangle \leq {\mathcal K}_7$. 
 
Let $\rho_7$ be the function obtained by  60 degrees anticlockwise rotation and let $\tau$ be the reflection 
with respect to the line along the vector $A_7$. Then $\rho_7, \tau_7\in {\rm Aut}(E_{3})$ and 
$\rho_7(A_7) = B_7$, $\rho_7(B_7) = F_7$,  $\rho_7(F_7) = - A_7$, $\tau_7(B_7) = -F_7$,  $\rho_7(F_7) = - B_7$.  Consider the group 
\begin{align*}
  {\mathcal G}_{7} & := \langle \alpha_7, \beta_7, \rho_7, \tau_7 \rangle.
\end{align*}

Clearly,  vertices of $E_{7}$ form twelve ${\mathcal H}_{7}$-orbits (represented by vertices of any $12$-gon). Therefore, vertices of $E_7$ form two $\langle \alpha_7, \beta_7, \rho_7 \rangle$-orbits. These are 
$\mbox{orb}(x_{0,0})$ and  $\mbox{orb}(u_{0,1})$   (see Fig. 4 (b)). 
Since $\tau_7(u_{1,0})= x_{0,0}$, it follows that  ${\mathcal G}_7$ acts vertex-transitively on $E_7$.  

\begin{figure}[ht!]
\tiny
\tikzstyle{ver}=[]
\tikzstyle{vert}=[circle, draw, fill=black!100, inner sep=0pt, minimum width=4pt]
\tikzstyle{vertex}=[circle, draw, fill=black!00, inner sep=0pt, minimum width=4pt]
\tikzstyle{edge} = [draw,thick,-]
\centering
\begin{tikzpicture}[scale=0.45]
\draw [xshift = -55, yshift = 285] ({2*cos(195)},{2*sin(185)}) -- ({2*cos(195)},{2*sin(195)}) -- ({2*cos(225)},{2*sin(225)}) -- ({2*cos(255)},{2*sin(255)}) -- ({2*cos(285)},{2*sin(285)}) -- ({2*cos(315)},{2*sin(315)}) -- ({2*cos(345)},{2*sin(345)});

\draw [xshift = 55, yshift = 285] ({2*cos(195)},{2*sin(185)}) -- ({2*cos(195)},{2*sin(195)}) -- ({2*cos(225)},{2*sin(225)}) -- ({2*cos(255)},{2*sin(255)}) -- ({2*cos(285)},{2*sin(285)}) -- ({2*cos(315)},{2*sin(315)}) -- ({2*cos(345)},{2*sin(345)});

\draw [xshift = 165, yshift = 285] ({2*cos(195)},{2*sin(185)}) -- ({2*cos(195)},{2*sin(195)}) -- ({2*cos(225)},{2*sin(225)}) -- ({2*cos(255)},{2*sin(255)}) -- ({2*cos(285)},{2*sin(285)}) -- ({2*cos(315)},{2*sin(315)}) -- ({2*cos(345)},{2*sin(345)});

\draw [xshift = 275, yshift = 285] ({2*cos(195)},{2*sin(185)}) -- ({2*cos(195)},{2*sin(195)}) -- ({2*cos(225)},{2*sin(225)}) -- ({2*cos(255)},{2*sin(255)}) -- ({2*cos(270)},{2*sin(270)});


\draw [xshift = -110] ({2*cos(90)},{2*sin(90)}) -- ({2*cos(75)},{2*sin(75)}) -- ({2*cos(45)},{2*sin(45)}) -- ({2*cos(15)},{2*sin(15)}) -- ({2*cos(345)},{2*sin(345)}) -- ({2*cos(315)},{2*sin(315)}) -- ({2*cos(285)},{2*sin(285)}) -- ({2*cos(270)},{2*sin(270)});

\draw ({2*cos(15)},{2*sin(15)}) -- ({2*cos(45)},{2*sin(45)}) -- ({2*cos(75)},{2*sin(75)}) --  ({2*cos(105)},{2*sin(105)}) -- ({2*cos(135)},{2*sin(135)}) -- ({2*cos(165)},{2*sin(165)}) -- ({2*cos(195)},{2*sin(195)}) -- ({2*cos(225)},{2*sin(225)}) -- ({2*cos(255)},{2*sin(255)}) -- ({2*cos(285)},{2*sin(285)}) -- ({2*cos(315)},{2*sin(315)}) -- ({2*cos(345)},{2*sin(345)}) -- ({2*cos(15)},{2*sin(15)});

\draw [xshift = 110] ({2*cos(15)},{2*sin(15)}) -- ({2*cos(45)},{2*sin(45)}) -- ({2*cos(75)},{2*sin(75)}) --  ({2*cos(105)},{2*sin(105)}) -- ({2*cos(135)},{2*sin(135)}) -- ({2*cos(165)},{2*sin(165)}) -- ({2*cos(195)},{2*sin(195)}) -- ({2*cos(225)},{2*sin(225)}) -- ({2*cos(255)},{2*sin(255)}) -- ({2*cos(285)},{2*sin(285)}) -- ({2*cos(315)},{2*sin(315)}) -- ({2*cos(345)},{2*sin(345)}) -- ({2*cos(15)},{2*sin(15)});

\draw [xshift = 220] ({2*cos(15)},{2*sin(15)}) -- ({2*cos(45)},{2*sin(45)}) -- ({2*cos(75)},{2*sin(75)}) --  ({2*cos(105)},{2*sin(105)}) -- ({2*cos(135)},{2*sin(135)}) -- ({2*cos(165)},{2*sin(165)}) -- ({2*cos(195)},{2*sin(195)}) -- ({2*cos(225)},{2*sin(225)}) -- ({2*cos(255)},{2*sin(255)}) -- ({2*cos(285)},{2*sin(285)}) -- ({2*cos(315)},{2*sin(315)}) -- ({2*cos(345)},{2*sin(345)}) -- ({2*cos(15)},{2*sin(15)});


\draw [xshift = -55, yshift = 95] ({2*cos(15)},{2*sin(15)}) -- ({2*cos(45)},{2*sin(45)}) -- ({2*cos(75)},{2*sin(75)}) --  ({2*cos(105)},{2*sin(105)}) -- ({2*cos(135)},{2*sin(135)}) -- ({2*cos(165)},{2*sin(165)}) -- ({2*cos(195)},{2*sin(195)}) -- ({2*cos(225)},{2*sin(225)}) -- ({2*cos(255)},{2*sin(255)}) -- ({2*cos(285)},{2*sin(285)}) -- ({2*cos(315)},{2*sin(315)}) -- ({2*cos(345)},{2*sin(345)}) -- ({2*cos(15)},{2*sin(15)});

\draw [xshift = 55, yshift = 95] ({2*cos(15)},{2*sin(15)}) -- ({2*cos(45)},{2*sin(45)}) -- ({2*cos(75)},{2*sin(75)}) --  ({2*cos(105)},{2*sin(105)}) -- ({2*cos(135)},{2*sin(135)}) -- ({2*cos(165)},{2*sin(165)}) -- ({2*cos(195)},{2*sin(195)}) -- ({2*cos(225)},{2*sin(225)}) -- ({2*cos(255)},{2*sin(255)}) -- ({2*cos(285)},{2*sin(285)}) -- ({2*cos(315)},{2*sin(315)}) -- ({2*cos(345)},{2*sin(345)}) -- ({2*cos(15)},{2*sin(15)});

\draw [xshift = 165, yshift = 95] ({2*cos(15)},{2*sin(15)}) -- ({2*cos(45)},{2*sin(45)}) -- ({2*cos(75)},{2*sin(75)}) --  ({2*cos(105)},{2*sin(105)}) -- ({2*cos(135)},{2*sin(135)}) -- ({2*cos(165)},{2*sin(165)}) -- ({2*cos(195)},{2*sin(195)}) -- ({2*cos(225)},{2*sin(225)}) -- ({2*cos(255)},{2*sin(255)}) -- ({2*cos(285)},{2*sin(285)}) -- ({2*cos(315)},{2*sin(315)}) -- ({2*cos(345)},{2*sin(345)}) -- ({2*cos(15)},{2*sin(15)});

\draw [xshift = 275, yshift = 95] ({2*cos(90)},{2*sin(90)}) -- ({2*cos(105)},{2*sin(105)}) -- ({2*cos(135)},{2*sin(135)}) -- ({2*cos(165)},{2*sin(165)}) -- ({2*cos(195)},{2*sin(195)}) -- ({2*cos(225)},{2*sin(225)}) -- ({2*cos(255)},{2*sin(255)}) -- ({2*cos(270)},{2*sin(270)});


\draw [xshift = -110, yshift = 190] ({2*cos(90)},{2*sin(90)}) -- ({2*cos(75)},{2*sin(75)}) -- ({2*cos(45)},{2*sin(45)}) -- ({2*cos(15)},{2*sin(15)}) -- ({2*cos(345)},{2*sin(345)}) -- ({2*cos(315)},{2*sin(315)}) -- ({2*cos(285)},{2*sin(285)}) -- ({2*cos(270)},{2*sin(270)});

\draw [yshift = 190] ({2*cos(15)},{2*sin(15)}) -- ({2*cos(45)},{2*sin(45)}) -- ({2*cos(75)},{2*sin(75)}) --  ({2*cos(105)},{2*sin(105)}) -- ({2*cos(135)},{2*sin(135)}) -- ({2*cos(165)},{2*sin(165)}) -- ({2*cos(195)},{2*sin(195)}) -- ({2*cos(225)},{2*sin(225)}) -- ({2*cos(255)},{2*sin(255)}) -- ({2*cos(285)},{2*sin(285)}) -- ({2*cos(315)},{2*sin(315)}) -- ({2*cos(345)},{2*sin(345)}) -- ({2*cos(15)},{2*sin(15)});

\draw [xshift = 110, yshift = 190] ({2*cos(15)},{2*sin(15)}) -- ({2*cos(45)},{2*sin(45)}) -- ({2*cos(75)},{2*sin(75)}) --  ({2*cos(105)},{2*sin(105)}) -- ({2*cos(135)},{2*sin(135)}) -- ({2*cos(165)},{2*sin(165)}) -- ({2*cos(195)},{2*sin(195)}) -- ({2*cos(225)},{2*sin(225)}) -- ({2*cos(255)},{2*sin(255)}) -- ({2*cos(285)},{2*sin(285)}) -- ({2*cos(315)},{2*sin(315)}) -- ({2*cos(345)},{2*sin(345)}) -- ({2*cos(15)},{2*sin(15)});

\draw [xshift = 220, yshift = 190] ({2*cos(15)},{2*sin(15)}) -- ({2*cos(45)},{2*sin(45)}) -- ({2*cos(75)},{2*sin(75)}) --  ({2*cos(105)},{2*sin(105)}) -- ({2*cos(135)},{2*sin(135)}) -- ({2*cos(165)},{2*sin(165)}) -- ({2*cos(195)},{2*sin(195)}) -- ({2*cos(225)},{2*sin(225)}) -- ({2*cos(255)},{2*sin(255)}) -- ({2*cos(285)},{2*sin(285)}) -- ({2*cos(315)},{2*sin(315)}) -- ({2*cos(345)},{2*sin(345)}) -- ({2*cos(15)},{2*sin(15)});

\draw [xshift = -55, yshift = -95] ({2*cos(15)},{2*sin(15)}) -- ({2*cos(45)},{2*sin(45)}) -- ({2*cos(75)},{2*sin(75)}) --  ({2*cos(105)},{2*sin(105)}) -- ({2*cos(135)},{2*sin(135)}) -- ({2*cos(165)},{2*sin(165)}) -- ({2*cos(165)},{2*sin(172.5)});

\draw [xshift = 55, yshift = -95] ({2*cos(15)},{2*sin(15)}) -- ({2*cos(45)},{2*sin(45)}) -- ({2*cos(75)},{2*sin(75)}) --  ({2*cos(105)},{2*sin(105)}) -- ({2*cos(135)},{2*sin(135)}) -- ({2*cos(165)},{2*sin(165)}) -- ({2*cos(165)},{2*sin(172.5)});

\draw [xshift = 165, yshift = -95] ({2*cos(15)},{2*sin(15)}) -- ({2*cos(45)},{2*sin(45)}) -- ({2*cos(75)},{2*sin(75)}) --  ({2*cos(105)},{2*sin(105)}) -- ({2*cos(135)},{2*sin(135)}) -- ({2*cos(165)},{2*sin(165)}) -- ({2*cos(165)},{2*sin(172.5)});

\draw [xshift = 275, yshift = -95] ({2*cos(90)},{2*sin(90)}) -- ({2*cos(105)},{2*sin(105)}) -- ({2*cos(135)},{2*sin(135)}) -- ({2*cos(165)},{2*sin(165)}) -- ({2*cos(165)},{2*sin(172.5)});

\node[ver] () at (-2.3,-2.2){\tiny $w_{-2,-1}$};
\node[ver] () at (1.1-4.2,1.2){\tiny $v_{-1,0}$};
\node[ver] () at (.5-4.3,1.7){\tiny $v_{-2,0}$};

\node[ver] () at (-.9,-.6){\tiny $u_{-1,-1}$};
\node[ver] () at (-.8,-1.3){\tiny $v_{0,-1}$};
\node[ver] () at (0,-1.7){\tiny $v_{1,-1}$};
\node[ver] () at (2.7,-.6){\tiny $u_{1,-1}$};
\node[ver] () at (.8,-1.3){\tiny $w_{1,-1}$};
\node[ver] () at (1.3,-2.2){\tiny $w_{0,-1}$};
\node[ver] () at (2.6,.4){\tiny $u_{0,0}$};
\node[ver] () at (1,1.2){\tiny $v_{1,0}$};
\node[ver] () at (1.1,1.9){\tiny $v_{0,0}$};
\node[ver] () at (-1,.4){\tiny $u_{-2,0}$};
\node[ver] () at (-.5,1.2){\tiny $w_{-2,0}$};
\node[ver] () at (-1.4,1.9){\tiny $w_{-1,0}$};

\node[ver] () at (3.2,-1.3){\tiny $v_{2,-1}$};
\node[ver] () at (3.8,-1.7){\tiny $v_{3,-1}$};
\node[ver] () at (5.1,-.5){\tiny $u_{3,-1}$};
\node[ver] () at (1.1+3.6,-1.3){\tiny $w_{3,-1}$};
\node[ver] () at (1.3+3.8,-2.2){\tiny $w_{2,-1}$};
\node[ver] () at (1.4+3.8,.5){\tiny $u_{2,0}$};
\node[ver] () at (1.1+3.8,1.1){\tiny $v_{3,0}$};
\node[ver] () at (5,1.9){\tiny $v_{2,0}$};
\node[ver] () at (-.7+3.8,1.3){\tiny $w_{0,0}$};
\node[ver] () at (2.7,1.9){\tiny $w_{1,0}$};

\node[ver] () at (6.9,-1.3){\tiny $v_{4,-1}$};
\node[ver] () at (6.7,-2.2){\tiny $v_{5,-1}$};
\node[ver] () at (1.4+7.6,-.5){\tiny $u_{5,-1}$};
\node[ver] () at (8.4,-1.3){\tiny $w_{5,-1}$};
\node[ver] () at (1.3+7.6,-2.2){\tiny $w_{4,-1}$};
\node[ver] () at (1.5+7.6,.5){\tiny $u_{4,0}$};
\node[ver] () at (1.1+7.6,1.1){\tiny $v_{5,0}$};
\node[ver] () at (8.8,1.9){\tiny $v_{4,0}$};
\node[ver] () at (-.7+7.6,1.3){\tiny $w_{2,0}$};
\node[ver] () at (-.4+7,1.9){\tiny $w_{3,0}$};

\node[ver] () at (1-4.1,-1.2 + 4){\tiny $u_{-3,0}$};

\node[ver] () at (0.8,-1.2 + 4){\tiny $u_{-1,0}$};

\node[ver] () at (4.5,-1.2 + 4){\tiny $u_{1,0}$};

\node[ver] () at (8.3,-1.2 + 4){\tiny $u_{3,0}$};

\node[ver] () at (1.2-4,-1.2 - 1.7){\tiny $u_{-2,-1}$};
\node[ver] () at (1.4-.4,-1.2 - 1.7){\tiny $u_{0,-1}$};
\node[ver] () at (1.4+3.4,-1.2 - 1.7){\tiny $u_{2,-1}$};
\node[ver] () at (1.6+7,-1.2 - 1.7){\tiny $u_{4,-1}$};

\node[ver] () at (-3.3,5.3){\tiny $w_{-3,1}$};
\node[ver] () at (-2.5,4.6){\tiny $w_{-4,1}$};
\node[ver] () at (1.1-4.2,7.8){\tiny $v_{-3,2}$};
\node[ver] () at (.5-4.3,1.7+6.6){\tiny $v_{-4,2}$};

\node[ver] () at (-1.2,-.5+6.7){\tiny $u_{-3,1}$};
\node[ver] () at (-.7,5.3){\tiny $v_{-2,1}$};
\node[ver] () at (-1.2,4.6){\tiny $v_{-1,1}$};
\node[ver] () at (1.2,-.5+6.7){\tiny $u_{-1,1}$};
\node[ver] () at (.8,5.3){\tiny $w_{-1,1}$};
\node[ver] () at (1.4,4.6){\tiny $w_{-2,1}$};
\node[ver] () at (1.1,.5+6.7){\tiny $u_{-2,2}$};
\node[ver] () at (.8,7.8){\tiny $v_{-1,2}$};
\node[ver] () at (1.2,8.7){\tiny $v_{-2,2}$};
\node[ver] () at (-2.8,.5+6.7){\tiny $u_{-4,2}$};
\node[ver] () at (-.6,7.8){\tiny $w_{-4,2}$};
\node[ver] () at (-1.2,8.7){\tiny $w_{-3,2}$};

\node[ver] () at (2.1,5){\tiny $v_{0,1}$};
\node[ver] () at (3.7,4.9){\tiny $v_{1,1}$};
\node[ver] () at (1.4+3.8,-.5+6.7){\tiny $u_{1,1}$};
\node[ver] () at (1.1+3.8,-1.1+6.7){\tiny $w_{1,1}$};
\node[ver] () at (1.3+3.8,-1.6+6.2){\tiny $w_{0,1}$};
\node[ver] () at (1.4+3.8,.5+6.7){\tiny $u_{0,2}$};
\node[ver] () at (1.1+3.8,7.8){\tiny $v_{1,2}$};
\node[ver] () at (4.8,8.7){\tiny $v_{0,2}$};
\node[ver] () at (3.4,7.8){\tiny $w_{-2,2}$};
\node[ver] () at (3.8,8.2){\tiny $w_{-1,2}$};

\node[ver] () at (-1+7.6,-1.1+6.7){\tiny $v_{2,1}$};
\node[ver] () at (-.5+7.6,-1.6+6.7){\tiny $v_{3,1}$};
\node[ver] () at (1.4+7.6,-.5+6.7){\tiny $u_{3,1}$};
\node[ver] () at (1.1+7.6,-1.1+6.7){\tiny $w_{3,1}$};
\node[ver] () at (.5+7.6,-1.6+6.7){\tiny $w_{2,1}$};
\node[ver] () at (1.5+7.6,.5+6.7){\tiny $u_{2,2}$};
\node[ver] () at (1.1+7.6,7.8){\tiny $v_{3,2}$};
\node[ver] () at (8.7,8.7){\tiny $v_{2,2}$};
\node[ver] () at (-.7+7.6,7.8){\tiny $w_{0,2}$};
\node[ver] () at (6.5,8.7){\tiny $w_{1,2}$};

\node[ver] () at (1-4.1,-1.2 + 4+6.7){\tiny $u_{-5,2}$};

\node[ver] () at (0.8,-1.2 + 4+6.7){\tiny $u_{-3,2}$};

\node[ver] () at (4.6,-1.2 + 4+6.7){\tiny $u_{-1,2}$};

\node[ver] () at (8.4,-1.2 + 4+6.7){\tiny $u_{1,2}$};

\node[ver] () at (1.1-4.2,-1.2 - 1.7+6.7){\tiny $u_{-4,1}$};
\node[ver] () at (1.2-.4,-1.2 - 1.7+6.7){\tiny $u_{-2,1}$};
\node[ver] () at (1+3.5,-1.2 - 1.7+6.7){\tiny $u_{0,1}$};
\node[ver] () at (1.4+7,-1.2 - 1.7+6.7){\tiny $u_{2,1}$};

\draw [thick, dotted] (2,3.3) -- (5.8,3.3);
\draw [thick, dotted] (2,3.3) -- (3.7,6.7);
\draw [thick, dotted] (2,3.3) -- (0,6.7);

\node[ver] () at (2,3.3){$\bullet$};
\node[ver] () at (5.8,3.3){$\bullet$};
\node[ver] () at (3.7,6.7){$\bullet$};
\node[ver] () at (0,6.7){$\bullet$};

\node[ver] () at (3, -4.5){\normalsize (a): Truncated hexagonal tiling $E_{6}$};

\end{tikzpicture}
\begin{tikzpicture}[scale=0.45]

\draw [xshift = -80, yshift = 8] ({1*cos(25)},{1*sin(14)}) -- ({1*cos(100)},{1*sin(14)});
\draw [xshift = -80, yshift = -22] ({1*cos(25)},{1*sin(14)}) -- ({1*cos(100)},{1*sin(14)});

\draw [xshift = -140] ({2*cos(90)},{2*sin(90)}) -- ({2*cos(75)},{2*sin(75)}) -- ({2*cos(45)},{2*sin(45)}) -- ({2*cos(15)},{2*sin(15)}) -- ({2*cos(345)},{2*sin(345)}) -- ({2*cos(315)},{2*sin(315)}) -- ({2*cos(285)},{2*sin(285)}) -- ({2*cos(270)},{2*sin(270)});

\draw [xshift = 60, yshift = 8] ({1*cos(25)},{1*sin(14)}) -- ({1*cos(100)},{1*sin(14)});
\draw [xshift = 60, yshift = -22] ({1*cos(25)},{1*sin(14)}) -- ({1*cos(100)},{1*sin(14)});

\draw ({2*cos(15)},{2*sin(15)}) -- ({2*cos(45)},{2*sin(45)}) -- ({2*cos(75)},{2*sin(75)}) --  ({2*cos(105)},{2*sin(105)}) -- ({2*cos(135)},{2*sin(135)}) -- ({2*cos(165)},{2*sin(165)}) -- ({2*cos(195)},{2*sin(195)}) -- ({2*cos(225)},{2*sin(225)}) -- ({2*cos(255)},{2*sin(255)}) -- ({2*cos(285)},{2*sin(285)}) -- ({2*cos(315)},{2*sin(315)}) -- ({2*cos(345)},{2*sin(345)}) -- ({2*cos(15)},{2*sin(15)});

\draw [xshift = -78, yshift = -22] (-1.7,2.7) -- (-1.2,3.6);
\draw [xshift = -90, yshift = 10] (-.35,1.05) -- (0.1,1.95);

\draw [xshift = 62, yshift = -22] (-1.7,2.7) -- (-1.2,3.6);
\draw [xshift = 50, yshift = 10] (-.35,1.05) -- (0.1,1.95);

\draw [xshift = 202, yshift = -22] (-1.7,2.7) -- (-1.2,3.6);
\draw [xshift = 190, yshift = 10] (-.35,1.05) -- (0.1,1.95);

\draw [xshift = 342, yshift = -22] (-1.7,2.7) -- (-1.2,3.6);
\draw [xshift = 330, yshift = 10] (-.35,1.05) -- (0.1,1.95);

\draw [xshift = 200, yshift = 8] ({1*cos(25)},{1*sin(14)}) -- ({1*cos(100)},{1*sin(14)});
\draw [xshift = 200, yshift = -22] ({1*cos(25)},{1*sin(14)}) -- ({1*cos(100)},{1*sin(14)});

\draw [xshift = 140] ({2*cos(15)},{2*sin(15)}) -- ({2*cos(45)},{2*sin(45)}) -- ({2*cos(75)},{2*sin(75)}) --  ({2*cos(105)},{2*sin(105)}) -- ({2*cos(135)},{2*sin(135)}) -- ({2*cos(165)},{2*sin(165)}) -- ({2*cos(195)},{2*sin(195)}) -- ({2*cos(225)},{2*sin(225)}) -- ({2*cos(255)},{2*sin(255)}) -- ({2*cos(285)},{2*sin(285)}) -- ({2*cos(315)},{2*sin(315)}) -- ({2*cos(345)},{2*sin(345)}) -- ({2*cos(15)},{2*sin(15)});

\draw [xshift = 280] ({2*cos(15)},{2*sin(15)}) -- ({2*cos(45)},{2*sin(45)}) -- ({2*cos(75)},{2*sin(75)}) --  ({2*cos(105)},{2*sin(105)}) -- ({2*cos(135)},{2*sin(135)}) -- ({2*cos(165)},{2*sin(165)}) -- ({2*cos(195)},{2*sin(195)}) -- ({2*cos(225)},{2*sin(225)}) -- ({2*cos(255)},{2*sin(255)}) -- ({2*cos(285)},{2*sin(285)}) -- ({2*cos(315)},{2*sin(315)}) -- ({2*cos(345)},{2*sin(345)}) -- ({2*cos(15)},{2*sin(15)});

\draw [xshift = 0, yshift = 50] (-.5,.15) -- (-1.13,1.05);
\draw [xshift = -26, yshift = 35] (-.5,.15) -- (-1.13,1.05);

\draw [xshift = 140, yshift = 50] (-.5,.15) -- (-1.13,1.05);
\draw [xshift = 114, yshift = 35] (-.5,.15) -- (-1.13,1.05);

\draw [xshift = 280, yshift = 50] (-.5,.15) -- (-1.13,1.05);
\draw [xshift = 254, yshift = 35] (-.5,.15) -- (-1.13,1.05);


\draw [xshift = -68, yshift = -70] (-.92,.7) -- (-1.13,1.05);
\draw [xshift = -93, yshift = -85] (-.92,.7) -- (-1.13,1.05);

\draw [xshift = 72, yshift = -70] (-.92,.7) -- (-1.13,1.05);
\draw [xshift = 47, yshift = -85] (-.92,.7) -- (-1.13,1.05);

\draw [xshift = 212, yshift = -70] (-.92,.7) -- (-1.13,1.05);
\draw [xshift = 187, yshift = -85] (-.92,.7) -- (-1.13,1.05);

\draw [xshift = 352, yshift = -70] (-.92,.7) -- (-1.13,1.05);
\draw [xshift = 327, yshift = -85] (-.92,.7) -- (-1.13,1.05);


\draw [xshift = -70, yshift = 170] (-.6,.15) -- (-1.13,1.05);
\draw [xshift = -96, yshift = 155] (-.6,.15) -- (-1.13,1.05);

\draw [xshift = 70, yshift = 170] (-.6,.15) -- (-1.13,1.05);
\draw [xshift = 44, yshift = 155] (-.6,.15) -- (-1.13,1.05);

\draw [xshift = 210, yshift = 170] (-.6,.15) -- (-1.13,1.05);
\draw [xshift = 184, yshift = 155] (-.6,.15) -- (-1.13,1.05);

\draw [xshift = 350, yshift = 170] (-.6,.15) -- (-1.13,1.05);
\draw [xshift = 324, yshift = 155] (-.6,.15) -- (-1.13,1.05);

\draw [xshift = -72, yshift = 120] ({2*cos(15)},{2*sin(15)}) -- ({2*cos(45)},{2*sin(45)}) -- ({2*cos(75)},{2*sin(75)}) --  ({2*cos(105)},{2*sin(105)}) -- ({2*cos(135)},{2*sin(135)}) -- ({2*cos(165)},{2*sin(165)}) -- ({2*cos(195)},{2*sin(195)}) -- ({2*cos(225)},{2*sin(225)}) -- ({2*cos(255)},{2*sin(255)}) -- ({2*cos(285)},{2*sin(285)}) -- ({2*cos(315)},{2*sin(315)}) -- ({2*cos(345)},{2*sin(345)}) -- ({2*cos(15)},{2*sin(15)});

\draw [xshift = -12, yshift = 128] ({1*cos(25)},{1*sin(14)}) -- ({1*cos(100)},{1*sin(14)});
\draw [xshift = -12, yshift = 98] ({1*cos(25)},{1*sin(14)}) -- ({1*cos(100)},{1*sin(14)});

\draw [xshift = 68, yshift = 120] ({2*cos(15)},{2*sin(15)}) -- ({2*cos(45)},{2*sin(45)}) -- ({2*cos(75)},{2*sin(75)}) --  ({2*cos(105)},{2*sin(105)}) -- ({2*cos(135)},{2*sin(135)}) -- ({2*cos(165)},{2*sin(165)}) -- ({2*cos(195)},{2*sin(195)}) -- ({2*cos(225)},{2*sin(225)}) -- ({2*cos(255)},{2*sin(255)}) -- ({2*cos(285)},{2*sin(285)}) -- ({2*cos(315)},{2*sin(315)}) -- ({2*cos(345)},{2*sin(345)}) -- ({2*cos(15)},{2*sin(15)});

\draw [xshift = 128, yshift = 128] ({1*cos(25)},{1*sin(14)}) -- ({1*cos(100)},{1*sin(14)});
\draw [xshift = 128, yshift = 98] ({1*cos(25)},{1*sin(14)}) -- ({1*cos(100)},{1*sin(14)});

\draw [xshift = 208, yshift = 120] ({2*cos(15)},{2*sin(15)}) -- ({2*cos(45)},{2*sin(45)}) -- ({2*cos(75)},{2*sin(75)}) --  ({2*cos(105)},{2*sin(105)}) -- ({2*cos(135)},{2*sin(135)}) -- ({2*cos(165)},{2*sin(165)}) -- ({2*cos(195)},{2*sin(195)}) -- ({2*cos(225)},{2*sin(225)}) -- ({2*cos(255)},{2*sin(255)}) -- ({2*cos(285)},{2*sin(285)}) -- ({2*cos(315)},{2*sin(315)}) -- ({2*cos(345)},{2*sin(345)}) -- ({2*cos(15)},{2*sin(15)});

\draw [xshift = 268, yshift = 128] ({1*cos(25)},{1*sin(14)}) -- ({1*cos(100)},{1*sin(14)});
\draw [xshift = 268, yshift = 98] ({1*cos(25)},{1*sin(14)}) -- ({1*cos(100)},{1*sin(14)});

\draw [xshift = 348, yshift = 120] ({2*cos(90)},{2*sin(90)}) -- ({2*cos(105)},{2*sin(105)}) -- ({2*cos(135)},{2*sin(135)}) -- ({2*cos(165)},{2*sin(165)}) -- ({2*cos(195)},{2*sin(195)}) -- ({2*cos(225)},{2*sin(225)}) -- ({2*cos(255)},{2*sin(255)}) -- ({2*cos(270)},{2*sin(270)});

\draw [xshift = 272, yshift = 98] (-1.7,2.7) -- (-1.25,3.58);
\draw [xshift = 259, yshift = 130] (-.35,1.05) -- (0.1,1.95);

\draw [xshift = 132, yshift = 98] (-1.7,2.7) -- (-1.25,3.58);
\draw [xshift = 119, yshift = 130] (-.35,1.05) -- (0.1,1.95);

\draw [xshift = -8, yshift = 98] (-1.7,2.7) -- (-1.25,3.58);
\draw [xshift = -21, yshift = 130] (-.35,1.05) -- (0.1,1.95);


\draw [xshift = -83, yshift = 248] ({1*cos(25)},{1*sin(14)}) -- ({1*cos(100)},{1*sin(14)});
\draw [xshift = -83, yshift = 218] ({1*cos(25)},{1*sin(14)}) -- ({1*cos(100)},{1*sin(14)});

\draw [xshift = -143, yshift = 240] ({2*cos(90)},{2*sin(90)}) -- ({2*cos(75)},{2*sin(75)}) -- ({2*cos(45)},{2*sin(45)}) -- ({2*cos(15)},{2*sin(15)}) -- ({2*cos(345)},{2*sin(345)}) -- ({2*cos(315)},{2*sin(315)}) -- ({2*cos(285)},{2*sin(285)}) -- ({2*cos(270)},{2*sin(270)});

\draw [xshift = 57, yshift = 248] ({1*cos(25)},{1*sin(14)}) -- ({1*cos(100)},{1*sin(14)});
\draw [xshift = 57, yshift = 218] ({1*cos(25)},{1*sin(14)}) -- ({1*cos(100)},{1*sin(14)});

\draw [xshift = -3, yshift = 240] ({2*cos(15)},{2*sin(15)}) -- ({2*cos(45)},{2*sin(45)}) -- ({2*cos(75)},{2*sin(75)}) --  ({2*cos(105)},{2*sin(105)}) -- ({2*cos(135)},{2*sin(135)}) -- ({2*cos(165)},{2*sin(165)}) -- ({2*cos(195)},{2*sin(195)}) -- ({2*cos(225)},{2*sin(225)}) -- ({2*cos(255)},{2*sin(255)}) -- ({2*cos(285)},{2*sin(285)}) -- ({2*cos(315)},{2*sin(315)}) -- ({2*cos(345)},{2*sin(345)}) -- ({2*cos(15)},{2*sin(15)});

\draw [xshift = 197, yshift = 248] ({1*cos(25)},{1*sin(14)}) -- ({1*cos(100)},{1*sin(14)});
\draw [xshift = 197, yshift = 218] ({1*cos(25)},{1*sin(14)}) -- ({1*cos(100)},{1*sin(14)});

\draw [xshift = 317, yshift = 248] ({1*cos(25)},{1*sin(14)}) -- ({1*cos(60)},{1*sin(14)});
\draw [xshift = 317, yshift = 218] ({1*cos(25)},{1*sin(14)}) -- ({1*cos(60)},{1*sin(14)});

\draw [xshift = 320, yshift = 7] ({1*cos(25)},{1*sin(14)}) -- ({1*cos(60)},{1*sin(14)});
\draw [xshift = 320, yshift = -21] ({1*cos(25)},{1*sin(14)}) -- ({1*cos(60)},{1*sin(14)});

\draw [xshift = -153, yshift = 128] ({1*cos(25)},{1*sin(14)}) -- ({1*cos(60)},{1*sin(14)});
\draw [xshift = -153, yshift = 98] ({1*cos(25)},{1*sin(14)}) -- ({1*cos(60)},{1*sin(14)});

\draw [xshift = 137, yshift = 240] ({2*cos(15)},{2*sin(15)}) -- ({2*cos(45)},{2*sin(45)}) -- ({2*cos(75)},{2*sin(75)}) --  ({2*cos(105)},{2*sin(105)}) -- ({2*cos(135)},{2*sin(135)}) -- ({2*cos(165)},{2*sin(165)}) -- ({2*cos(195)},{2*sin(195)}) -- ({2*cos(225)},{2*sin(225)}) -- ({2*cos(255)},{2*sin(255)}) -- ({2*cos(285)},{2*sin(285)}) -- ({2*cos(315)},{2*sin(315)}) -- ({2*cos(345)},{2*sin(345)}) -- ({2*cos(15)},{2*sin(15)});

\draw [xshift = 277, yshift = 240] ({2*cos(15)},{2*sin(15)}) -- ({2*cos(45)},{2*sin(45)}) -- ({2*cos(75)},{2*sin(75)}) --  ({2*cos(105)},{2*sin(105)}) -- ({2*cos(135)},{2*sin(135)}) -- ({2*cos(165)},{2*sin(165)}) -- ({2*cos(195)},{2*sin(195)}) -- ({2*cos(225)},{2*sin(225)}) -- ({2*cos(255)},{2*sin(255)}) -- ({2*cos(285)},{2*sin(285)}) -- ({2*cos(315)},{2*sin(315)}) -- ({2*cos(345)},{2*sin(345)}) -- ({2*cos(15)},{2*sin(15)});


\draw [xshift = 275, yshift = -142] (-1.45,3.2) -- (-1.25,3.58);
\draw [xshift = 262, yshift = -110] (-.11,1.55) -- (0.1,1.95);

\draw [xshift = 135, yshift = -142] (-1.45,3.2) -- (-1.25,3.58);
\draw [xshift = 122, yshift = -110] (-.11,1.55) -- (0.1,1.95);

\draw [xshift = -5, yshift = -142] (-1.45,3.2) -- (-1.25,3.58);
\draw [xshift = -18, yshift = -110] (-.11,1.55) -- (0.1,1.95);


\draw [xshift = -80, yshift = 218] (-1.7,2.7) -- (-1.47,3.1);
\draw [xshift = -93, yshift = 250] (-.35,1.05) -- (-.1,1.5);

\draw [xshift = 60, yshift = 218] (-1.7,2.7) -- (-1.47,3.1);
\draw [xshift = 47, yshift = 250] (-.35,1.05) -- (-.1,1.5);

\draw [xshift = 200, yshift = 218] (-1.7,2.7) -- (-1.47,3.1);
\draw [xshift = 187, yshift = 250] (-.35,1.05) -- (-.1,1.5);

\draw [xshift = 340, yshift = 218] (-1.7,2.7) -- (-1.47,3.1);
\draw [xshift = 327, yshift = 250] (-.35,1.05) -- (-.1,1.5);


\draw [xshift = -1, yshift = 290] (-.6,.15) -- (-0.83,.55);
\draw [xshift = -26, yshift = 275] (-.6,.15) -- (-0.83,.55);

\draw [xshift = 139, yshift = 290] (-.6,.15) -- (-0.83,.55);
\draw [xshift = 114, yshift = 275] (-.6,.15) -- (-0.83,.55);

\draw [xshift = 279, yshift = 290] (-.6,.15) -- (-0.83,.55);
\draw [xshift = 254, yshift = 275] (-.6,.15) -- (-0.83,.55);

\node[ver] () at (1-5,-.5){\tiny $x_{-1,-2}$};
\node[ver] () at (-4.3,-1.3){\tiny $y_{-1,-2}$};
\node[ver] () at (-3.3,-2.1){\tiny $z_{-1,-2}$};
\node[ver] () at (1-5,.5){\tiny $u_{-1,-1}$};
\node[ver] () at (-4.3,1.3){\tiny $v_{-1,-1}$};
\node[ver] () at (.1-5,1.7){\tiny $w_{-1,-1}$};

\node[ver] () at (-1.1,-.5){\tiny $u_{0,-2}$};
\node[ver] () at (-.8,-1.3){\tiny $v_{0,-2}$};
\node[ver] () at (-1.5,-2){\tiny $w_{0,-2}$};
\node[ver] () at (1.1,-.5){\tiny $x_{0,-2}$};
\node[ver] () at (.8,-1.3){\tiny $y_{0,-2}$};
\node[ver] () at (1.4,-2.1){\tiny $z_{0,-2}$};
\node[ver] () at (1.1,.5){\tiny $u_{0,-1}$};
\node[ver] () at (.8,1.1){\tiny $v_{0,-1}$};
\node[ver] () at (1.4,1.9){\tiny $w_{0,-1}$};
\node[ver] () at (-1.1,.5){\tiny $x_{0,-1}$};
\node[ver] () at (-.8,1.1){\tiny $y_{0,-1}$};
\node[ver] () at (.1,1.5){\tiny $z_{0,-1}$};

\node[ver] () at (-1.1+5,-.5){\tiny $u_{1,-2}$};
\node[ver] () at (-.8+5,-1.3){\tiny $v_{1,-2}$};
\node[ver] () at (3.5,-2){\tiny $w_{1,-2}$};
\node[ver] () at (1.1+5,-.5){\tiny $x_{1,-2}$};
\node[ver] () at (.7+5,-1.3){\tiny $y_{1,-2}$};
\node[ver] () at (6.4,-2.1){\tiny $z_{1,-2}$};
\node[ver] () at (1.1+5,.5){\tiny $u_{1,-1}$};
\node[ver] () at (.8+5,1.1){\tiny $v_{1,-1}$};
\node[ver] () at (6.3,1.9){\tiny $w_{1,-1}$};
\node[ver] () at (-1.1+5,.5){\tiny $x_{1,-1}$};
\node[ver] () at (-.8+5,1.1){\tiny $y_{1,-1}$};
\node[ver] () at (5.1,1.5){\tiny $z_{1,-1}$};

\node[ver] () at (-1.2+10,-.5){\tiny $u_{2,-2}$};
\node[ver] () at (-.9+10,-1.3){\tiny $v_{2,-2}$};
\node[ver] () at (8.4,-2){\tiny $w_{2,-2}$};
\node[ver] () at (1+10,-.5){\tiny $x_{2,-2}$};
\node[ver] () at (.6+10,-1.3){\tiny $y_{2,-2}$};
\node[ver] () at (11.4,-2.1){\tiny $z_{2,-2}$};
\node[ver] () at (1+10,.5){\tiny $u_{2,-1}$};
\node[ver] () at (.7+10,1.1){\tiny $v_{2,-1}$};
\node[ver] () at (11.3,1.9){\tiny $w_{2,-1}$};
\node[ver] () at (-1.2+10,.5){\tiny $x_{2,-1}$};
\node[ver] () at (-.8+10,1.1){\tiny $y_{2,-1}$};
\node[ver] () at (10,1.5){\tiny $z_{2,-1}$};

\node[ver] () at (-1.1-2.5,-.5+4){\tiny $u_{-1,0}$};
\node[ver] () at (-1-2.3,-1.1+4){\tiny $v_{-1,0}$};
\node[ver] () at (-4,2.3){\tiny $w_{-1,0}$};
\node[ver] () at (1.2-2.6,-.5+4){\tiny $x_{-1,0}$};
\node[ver] () at (1-2.8,-1.1+4){\tiny $y_{-1,0}$};
\node[ver] () at (-1.1,2.2){\tiny $z_{-1,0}$};
\node[ver] () at (1.2-2.6,.5+4){\tiny $u_{-1,1}$};
\node[ver] () at (1-2.5,1.1+4){\tiny $v_{-1,1}$};
\node[ver] () at (.6-2.5,1.8+4){\tiny $w_{-1,1}$};
\node[ver] () at (-1.2-2.5,.5+4){\tiny $x_{-1,1}$};
\node[ver] () at (-1-2.5,1.1+4){\tiny $y_{-1,1}$};
\node[ver] () at (-.6-2.4,1.6+4){\tiny $z_{-1,1}$};

\node[ver] () at (-1.4+2.5,-.5+4.2){\tiny $u_{0,0}$};
\node[ver] () at (-1.2+2.6,-1.1+4){\tiny $v_{0,0}$};
\node[ver] () at (-.6+3,-1.6+4){\tiny $w_{0,0}$};
\node[ver] () at (1.2+2.5,-.5+4.2){\tiny $x_{0,0}$};
\node[ver] () at (1.1+2.2,-1.1+4){\tiny $y_{0,0}$};
\node[ver] () at (3.5,2.1){\tiny $z_{0,0}$};
\node[ver] () at (1.4+2.3,.5+4.2){\tiny $u_{0,1}$};
\node[ver] () at (1.4+3,1.1+4.4){\tiny $v_{0,1}$};
\node[ver] () at (.5+2.2,1.6+4.2){\tiny $w_{0,1}$};
\node[ver] () at (-1.4+2.5,.5+4.2){\tiny $x_{0,1}$};
\node[ver] () at (-1.2+1.5,1.1+4.4){\tiny $y_{0,1}$};
\node[ver] () at (2.4,6.3){\tiny $z_{0,1}$};

\node[ver] () at (-1.4+7.5,-.5+4.2){\tiny $u_{1,0}$};
\node[ver] () at (-1.2+7.6,-1.1+4){\tiny $v_{1,0}$};
\node[ver] () at (7.2,-1.6+4.1){\tiny $w_{1,0}$};
\node[ver] () at (1.2+7.5,-.5+4.2){\tiny $x_{1,0}$};
\node[ver] () at (1.1+7.1,-1.1+4){\tiny $y_{1,0}$};
\node[ver] () at (8.5,2.1){\tiny $z_{1,0}$};
\node[ver] () at (1.4+7.3,.5+4.2){\tiny $u_{1,1}$};
\node[ver] () at (1.1+7.2,1.1+4.3){\tiny $v_{1,1}$};
\node[ver] () at (8.5,1.6+4.5){\tiny $w_{1,1}$};
\node[ver] () at (-1.4+7.5,.5+4.2){\tiny $x_{1,1}$};
\node[ver] () at (-1.2+7.5,1.1+4.2){\tiny $y_{1,1}$};
\node[ver] () at (7.3,1.6+4.2){\tiny $z_{1,1}$};

\node[ver] () at (-1.6+12.5,-.5+4.2){\tiny $u_{2,0}$};
\node[ver] () at (-1.2+12.5,-1.1+4){\tiny $v_{2,0}$};
\node[ver] () at (-.6+12.5,-1.8+4.2){\tiny $w_{2,0}$};
\node[ver] () at (-1.6+12.5,.5+4.2){\tiny $x_{2,1}$};
\node[ver] () at (-1.2+12.5,1.1+4.4){\tiny $y_{2,1}$};
\node[ver] () at (-.4+12.5,1.6+4.2){\tiny $z_{2,1}$};


\node[ver] () at (2-6,-.5+8.3){\tiny $x_{-2,2}$};
\node[ver] () at (1.7-6,-1.3+8.5){\tiny $y_{-2,2}$};
\node[ver] () at (.1-5,-1.8+8.5){\tiny $z_{-2,2}$};
\node[ver] () at (1.2-5,.5+8.3){\tiny $u_{-2,3}$};
\node[ver] () at (.8-5,1.3+8.3){\tiny $v_{-2,3}$};
\node[ver] () at (.1-5,1.7+8.3){\tiny $w_{-2,3}$};

\node[ver] () at (-1.2,-.5+8.3){\tiny $u_{-1,2}$};
\node[ver] () at (-.8,-1.1+8.3){\tiny $v_{-1,2}$};
\node[ver] () at (-1.6,-1.6+8){\tiny $w_{-1,2}$};
\node[ver] () at (1.1,-.5+8.5){\tiny $x_{-1,2}$};
\node[ver] () at (2.2,-1.1+8.1){\tiny $y_{-1,2}$};
\node[ver] () at (0,-1.6+8.4){\tiny $z_{-1,2}$};
\node[ver] () at (1.1,.5+8.4){\tiny $u_{-1,3}$};
\node[ver] () at (1.3,10.4){\tiny $v_{-1,3}$};
\node[ver] () at (.6,1.5+8.2){\tiny $w_{-1,3}$};
\node[ver] () at (-1.2,.5+8.3){\tiny $x_{-1,3}$};
\node[ver] () at (-.8,1.1+8.3){\tiny $y_{-1,3}$};
\node[ver] () at (0,1.6+8.5){\tiny $z_{-1,3}$};

\node[ver] () at (-1.5+5,-.5+8.3){\tiny $u_{0,2}$};
\node[ver] () at (-1.3+5,-1.1+8.3){\tiny $v_{0,2}$};
\node[ver] () at (4.8,6.2){\tiny $w_{0,2}$};
\node[ver] () at (1.2+5,-.5+8.3){\tiny $x_{0,2}$};
\node[ver] () at (.7+5,-1.1+8.3){\tiny $y_{0,2}$};
\node[ver] () at (6,-1.6+8){\tiny $z_{0,2}$};
\node[ver] () at (1.2+5,.5+8.3){\tiny $u_{0,3}$};
\node[ver] () at (.7+5,1.1+8.3){\tiny $v_{0,3}$};
\node[ver] () at (6,10.4){\tiny $w_{0,3}$};
\node[ver] () at (-1.6+5.1,.5+8.3){\tiny $x_{0,3}$};
\node[ver] () at (-1.2+5,1.1+8.3){\tiny $y_{0,3}$};
\node[ver] () at (4.6,1.6+8.4){\tiny $z_{0,3}$};

\node[ver] () at (-1.5+10,-.5+8.3){\tiny $u_{1,2}$};
\node[ver] () at (-1.3+10.2,-1.1+8.1){\tiny $v_{1,2}$};
\node[ver] () at (9.8,6.2){\tiny $w_{1,2}$};
\node[ver] () at (1+10,-.5+8.3){\tiny $x_{1,2}$};
\node[ver] () at (.6+10,-1.1+8.2){\tiny $y_{1,2}$};
\node[ver] () at (11,-1.6+8){\tiny $z_{1,2}$};
\node[ver] () at (1.1+10,.5+8.3){\tiny $u_{1,3}$};
\node[ver] () at (.7+10,1.1+8.5){\tiny $v_{1,3}$};
\node[ver] () at (11,10.4){\tiny $w_{1,3}$};
\node[ver] () at (-1.6+10,.5+8.3){\tiny $x_{1,3}$};
\node[ver] () at (-1.2+10,1.1+8.3){\tiny $y_{1,3}$};
\node[ver] () at (-.6+10,1.6+8.4){\tiny $z_{1,3}$};

\draw [thick, dotted] (2.5,4.2) -- (7.5,4.2);
\draw [thick, dotted] (2.5,4.2) -- (5,8.4);
\draw [thick, dotted] (2.5,4.2) -- (0,8.4);

\node[ver] () at (2.5,4.2){$\bullet$};
\node[ver] () at (7.5,4.2){$\bullet$};
\node[ver] () at (5,8.4){$\bullet$};
\node[ver] () at (0,8.4){$\bullet$};

\node[ver] () at (3, -4){\normalsize (b): Truncated trihexagonal tiling $E_{7}$};

\end{tikzpicture}

\caption*{Figure 4}
\end{figure} 



\noindent {\bf Claim 5.}  ${\mathcal L}_7 \unlhd {\mathcal G}_7$.

\smallskip

By same arguments as in the proof of Claim 4, $\rho_7\circ\alpha_7^m\circ \rho_7^{-1} = \beta_7^m$,  $\rho_7 \circ \beta_7^m\circ \rho_7^{-1} = \alpha_7^{-m} \in {\mathcal L}_7$. Since $\alpha_7$ and $\tau_7$ commute, $\tau_7 \circ \alpha_7^m\circ \tau_7^{-1} = \alpha_7^m\in {\mathcal L}_7$. 
For $x, y\in \mathbb{R}$,  $(\tau_7\circ\beta_7^m\circ \tau_7^{-1})(xA_7+ yB_7) = ( \tau_7\circ\beta_7^m)(xA_7+y(-F_7)) = ( \tau_7\circ\beta_7^m)(xA_7 +y(A_7-B_7)) = ( \tau_7\circ\beta_7^m)((x+y)A_7 - yB_7) =   \tau_7((x+y)A_7 +(m- y)B_7) =  (x+y)A_7 + (m-y)(-F_7) = (x+y)A_7 + (m-y)(A_7- B_7) = (xA_7 + yB_7) + mA_7 -mB_7 = (\alpha_7^m \circ \beta_7^{-m})(xA_7 + yB_7)$. Thus, $\tau_7\circ\beta_7^m \circ \tau_7^{-1} = \alpha_7^m \circ \beta_7^{-m}\in {\mathcal L}_7$. 
 Since $\alpha_1$ and $\alpha_2$ commute, these prove  Claim 5.

\smallskip

By Claim 5, ${\mathcal G}_7/{\mathcal L}_7$ is a group. Since ${\mathcal L}_7 \unlhd {\mathcal K}_7$, $X = E_7/{\mathcal K}_7$ is a toroidal map and ${\mathcal L}_7$ is generated by two independent vectors, it follows that $Y := E_7/{\mathcal L}_7$ is a toroidal map and $v+ {\mathcal L}_7 \mapsto v + {\mathcal K}_7$ gives a (natural) covering $\eta: Y \to X$. Since the action of ${\mathcal G}_7$ on $E_7$ is vertex-transitive,  the action of ${\mathcal G}_7/{\mathcal L}_7$ on $Y = E_7/{\mathcal L}_7$ is also vertex-transitive. This proves the result when the vertex-type of $X$ is $[4^1,6^1,12^1]$. This completes the proof.  
\end{proof}


\begin{remark} \label{remark1}
{\rm  Let $X$ be a semi-equivelar toroidal map of  vertex-type  $[4^1, 8^2]$ and let  $Y = E_1/{\mathcal L}_1$ be as in the proof of Theorem \ref{theo:se-vt}.  Then  $\eta : Y = E_1/{\mathcal L}_1\to X = E_1/{\mathcal K}_1$ is an $n$-fold cover, where $n = [{\mathcal K}_1 : {\mathcal L}_1] = [{\mathcal H}_1 : {\mathcal L}_1]/[{\mathcal H}_1 : {\mathcal K}_1] = m^2/|ad - bc|$. Since $m$ is a factor of $|ad-bc|$, it follows that $n$ is a factor of $m$ and hence a factor of $|ad-bc|$. If we take $A_1=(1,0)$ and $B_1=(0,1)$ then the area of the parallelogram whose two adjacent sides are $aA_1+bB_1$ and $cA_1 + dB_1$ is $|ad-bc|$. Thus, $|ad-bc|$ is the area of the surface (torus) $|E_1/{\mathcal K}_1|$. 
Similar facts are true for other semi-equivelar toroidal maps also.  For $3\leq i\leq 7$, if $A_i$ and $B_i$ are unit vectors then the area of the torus $|E_i/{\mathcal K}_i|$ is $|ad-bc|\times$(the area of the parallelogram whose two adjacent sides are $A_i$ and $B_i$) $=\frac{\sqrt3}{2}|ad-bc|$. }
\end{remark}

\begin{remark} \label{remark2}
{\rm For any positive integer $r$, let $r{\mathcal K}_i := \langle\alpha_i^{rm}, \beta_i^{rm}\rangle$. Then, by the similar arguments, $Y_{r} := E_i/(r{\mathcal K}_i)$ is also a vertex-transitive cover of $X$ in each of the seven cases. Therefore, $X$ has countably many vertex-transitive covers. 
}
\end{remark}



%

{\small

}


\begin{thebibliography}{99}
%
\bibitem{DM2017}
B. Datta and D. Maity, Semi-equivelar and vertex-transitive maps on the torus,  {\em Beitr Algebra Geom}, 58(3):617--634,  2017. 
%
\bibitem{DM2018}
B. Datta and D. Maity, Semi-equivelar maps on the torus and the Klein bottle are Archimedean, {\em Discrete Math.},  341(12):3296--3309,  2018. 
%
\bibitem{DU2005}
B. Datta and  A. K. Upadhyay, Degree-regular triangulations of torus and Klein bottle, {\em Proc. Indian Acad. Sci. $($Math. Sci.$)$}, 115(3):279--307, 2005. 
%
\bibitem{DHMS2019}
K. Drach, Y. Haidamaka, M. Mixer and M. Skoryk, Archimedean toroidal maps and their minimal almost regular covers, {\em Ars Math. Contemp.},  17(2):493--514, 2019.  
%
\bibitem{GS1977}
B. Gr\"{u}nbaum and  G. C. Shephard, Tilings by regular polygons: Patterns in the plane from Kepler to the present, including recent results and unsolved problems, {\em Math. Mag.},  50(5):227--247, 1977.  
%
\end{thebibliography}
\end{document}